\DeclareMathOperator{\Cone}{Cone} 
 \DeclareMathOperator{\id}{id}
  \DeclareMathOperator{\ord}{Ord}
\DeclareMathOperator{\DAG}{DAG}
\DeclareMathOperator{\uDAG}{uDAG}
\DeclareMathOperator{\Pos}{Pos}
\DeclareMathOperator{\Stel}{Stel}
\DeclareMathOperator{\BStel}{BStel}
\DeclareMathOperator{\Cy}{Cy}
\DeclareMathOperator{\Jc}{Jc}
\DeclareMathOperator{\birth}{birth}
\DeclareMathOperator{\DisDS}{DisDS}
\DeclareMathOperator{\PrePos}{PrePos}
\DeclareMathOperator{\Top}{Top}
\DeclareMathOperator{\TopT0}{TopSep}
\newcommand{\Ro}{\mathbb{R}}
\newcommand{\Rg}{\mathbb{R}_{\geqslant 0}}
\newcommand{\Bb}{\mathbb{B}}
\newcommand{\ca}[1]{\mathcal{#1}}
\newcommand{\Hr}{\widetilde{H}}
\newcommand{\dd}{\partial}
\newcommand{\F}{\mathcal{F}}
\newcommand{\Pa}{\mathcal{P}}
\newcommand{\Ha}{\mathcal{H}}
\newcommand{\Pc}{\mathbf{Q}}
\newcommand{\Ss}{\mathbb{S}}
\newcommand{\bD}{\bar{D}}
\newcommand{\bP}{\bar{P}}
\newcounter{stmcounter}[section]
\newcounter{thcounter}
\numberwithin{equation}{section}
\theoremstyle{plain}
\newtheorem{cor}[stmcounter]{Corollary}
\newtheorem{thm}[thcounter]{Theorem}
\newtheorem{prop}[stmcounter]{Proposition}
\newtheorem{lem}[stmcounter]{Lemma}
\theoremstyle{definition}
\newtheorem{defin}[stmcounter]{Definition}
\theoremstyle{remark}
\newtheorem{ex}[stmcounter]{Example}
\newtheorem{rem}[stmcounter]{Remark}
\newtheorem{con}[stmcounter]{Construction}
\begin{document}

\title{Coverings by open and closed hemispheres}

\author{Anton Ayzenberg}
\address{Neapolis University Paphos, Cyprus, and Faculty of computer science, Higher School of Economics, Russia}
\email{ayzenberga@gmail.com}

\author{Maxim Beketov}
\address{Laboratory of algebraic topology and its applications, Faculty of computer science, Higher School of Economics, Russia}
\email{maxim.beketov@gmail.com }

\author{German Magai}
\address{Laboratory of algebraic topology and its applications, Faculty of computer science, Higher School of Economics, Russia}
\email{gmagaj@hse.ru}

\date{\today}
\thanks{The article was prepared within the framework of the HSE University Basic Research Program}

\subjclass[2020]{Primary 55P10, 55P15, 57Z25, 54A10, 52C35; Secondary 05C20, 52B35, 14N20, 05C40, 06A15, 52B12, 17B22, 52A55, 52B40.}

\keywords{Nerve theorem, covering by hemispheres, geometric lattice, space of directed acyclic graphs, strong connectivity of digraphs, Gale duality}

\begin{abstract}
In this paper we study the nerves of two types of coverings of a sphere $S^{d-1}$:
\begin{enumerate}
  \item coverings by open hemispheres;
  \item antipodal coverings by closed hemispheres.
\end{enumerate}
In the first case, nerve theorem implies that the nerve is homotopy equivalent to $S^{d-1}$. In the second case, we prove that the nerve is homotopy equivalent to a wedge of $(2d-2)$-dimensional spheres. The number of wedge summands equals the M\"{o}bius invariant of the geometric lattice (or hyperplane arrangement) associated with the covering. This result explains some observed large-scale phenomena in topological data analysis. We review the particular case, when the coverings are centered in the root system $A_d$. In this case the nerve of the covering by open hemispheres is the space of directed acyclic graphs (DAGs), and the nerve of the covering by closed hemispheres is the space of non-strongly connected directed graphs. The homotopy types of these spaces were described by Bj\"{o}rner and Welker, and the incarnation of these spaces appeared independently as ``the poset of orders'' and ``the poset of preorders'' respectively in the works of Bouc. We study the space of DAGs in terms of Gale and combinatorial Alexander dualities, and propose how this space can be applied in automated machine learning.
\end{abstract}

\maketitle

\section{Introduction}\label{secIntro}

Let $\Ss^{d-1}$ be the unit sphere in a space $V\cong\Ro^d$ centered in the origin. Consider a finite (multi)set of points $Z=\{z_1,\ldots,z_m\}\subset \Ss^{d-1}$ (repetitions are allowed); we call it a spherical configuration. Let us say that points $z_{i_1},\ldots,z_{i_k}$ form \emph{a constellation}, if they lie in a single open hemisphere. All constellations form a simplicial complex on the vertex set $[m]=\{1,\ldots,m\}$, we call it \emph{the constellation complex}:
\[
\Stel(Z)=\{\{i_1,\ldots,i_k\}\mid \text{ points }z_{i_1},\ldots,z_{i_k}\text{ lie in an open hemisphere}\}.
\]
Let $D(z)$ denote the open hemisphere centered in a point $z\in\Ss^{d-1}$. It can be seen that $\Stel(Z)$ coincides with the nerve of the covering which consists of open hemispheres $D(z_1),\ldots,D(z_m)$. We call a spherical configuration $Z$ \emph{ample} if any open hemisphere contains at least one point of $Z$. Equivalently, $Z$ is ample if the convex hull of the whole $Z$ contains the origin in its interior. Equivalently, $\Ss^{d-1}=\bigcup_{i\in[m]}D(z_i)$. The following statement is a direct consequence of the nerve theorem.

\begin{thm}\label{thmConstelSphere}
Assume $Z\subset\Ss^{d-1}$ is ample. Then the constellation complex $\Stel(Z)$ is homotopy equivalent to $S^{d-1}$.
\end{thm}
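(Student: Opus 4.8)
The plan is to recognize the statement as an instance of the nerve theorem. By hypothesis $Z$ is ample, which in the third reformulation given above means $\Ss^{d-1} = \bigcup_{i\in[m]} D(z_i)$, so $\{D(z_1),\dots,D(z_m)\}$ is a finite open covering of $\Ss^{d-1}$; and, as already observed, $\Stel(Z)$ is exactly its nerve. Thus it suffices to check that this covering is \emph{good}, i.e. that every nonempty intersection $D(z_{i_1})\cap\dots\cap D(z_{i_k})$ is contractible, and then quote the nerve theorem to obtain $\Stel(Z)\simeq\Ss^{d-1}$.

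To verify goodness I would write $D(z)=\{x\in\Ss^{d-1}\mid\langle x,z\rangle>0\}$, so that
\[
D(z_{i_1})\cap\dots\cap D(z_{i_k}) = C\cap\Ss^{d-1},\qquad C:=\{x\in V\mid \langle x,z_{i_j}\rangle>0,\ j=1,\dots,k\}.
\]
Here $C$ is an open convex cone, and it does not contain the origin since the defining inequalities are strict. If $C\cap\Ss^{d-1}\neq\varnothing$ then $C\neq\varnothing$, hence $C$ is contractible; moreover the radial map $r\colon C\to C\cap\Ss^{d-1}$, $r(x)=x/\lVert x\rVert$, is well defined because $0\notin C$, takes values in $C$ because $C$ is a cone, and the straight-line homotopy $(x,t)\mapsto(1-t)x+t\,x/\lVert x\rVert$ stays inside $C$ by convexity. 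So $C\cap\Ss^{d-1}$ is a deformation retract of the contractible set $C$, hence contractible. This establishes goodness of the cover, and the nerve theorem then completes the proof.

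I do not expect a serious obstacle here: the argument is bookkeeping around a clean geometric fact — an intersection of open hemispheres is radially contractible. The only points that deserve care are citing a version of the nerve theorem applicable to a finite open good cover of a space as tame as $\Ss^{d-1}$ (a finite CW complex); using ampleness specifically in the form $\bigcup_i D(z_i)=\Ss^{d-1}$ rather than one of its equivalent reformulations; and recording that $0\notin C$, since that is precisely what upgrades the radial map from a mere surjection onto $C\cap\Ss^{d-1}$ to a genuine deformation retraction.
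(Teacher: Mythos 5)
Your proposal is correct and follows essentially the same route as the paper, which gives only the one-line remark that the nerve theorem applies because nonempty intersections of open hemispheres are contractible. You merely flesh out that remark by exhibiting the radial deformation retraction of the open convex cone $C$ onto $C\cap\Ss^{d-1}$, which is exactly the implicit content of the paper's sentence.
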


Indeed, the intersections of open hemispheres are contractible unless empty therefore the nerve theorem is applicable in this case. The construction becomes more interesting if one replaces open hemispheres by closed hemispheres. We say that points $z_{i_1},\ldots,z_{i_k}$ form \emph{a big constellation}, if they lie in a single closed hemisphere. The simplicial complex of all big constellations is called \emph{the big constellation complex}:
\[
\BStel(Z)=\{\{i_1,\ldots,i_k\}\mid \text{ points }z_{i_1},\ldots,z_{i_k}\text{ lie in a closed hemisphere}\}.
\]
If $\bD(z)$ denotes the closed hemisphere centered in $z\in\Ss^{d-1}$, then $\BStel(Z)$ is the nerve of the covering of $\Ss^{d-1}$ by the closed subsets $\bD(z_1),\ldots,\bD(z_m)$. Notice that the nerve theorem is not applicable in this case, since intersections of closed hemispheres may be non-contractible. For example, the intersection of two antipodal closed hemispheres is an equatorial subsphere $S^{d-2}$.

In general, we do not know the homotopy type of $\BStel(Z)$. However, the homotopy type can be described in a particular case. We say that a spherical configuration $Z$ is \emph{antipodal}, if, whenever, $z\in Z$, the antipodal point $-z\in\Ss^{d-1}$ is also contained in $Z$ (with the same multiplicity).

\begin{thm}\label{thmBigConstelSphere}
Assume that $Z\subset\Ss^{d-1}$ is ample and antipodal. Then the big constellation complex $\BStel(Z)$ is homotopy equivalent to a wedge of spheres of dimension $2(d-1)$.
\end{thm}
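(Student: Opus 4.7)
I propose proving Theorem~\ref{thmBigConstelSphere} via combinatorial Alexander duality, reducing the problem to a homotopy computation on the intersection lattice of the hyperplane arrangement associated with $Z$.

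\textbf{Step 1 (Alexander duality).} Pass to the combinatorial Alexander dual $\BStel(Z)^\vee\subseteq 2^{[m]}$: a subset $F\subseteq[m]$ is a face of $\BStel(Z)^\vee$ iff the complementary configuration $\{z_i\}_{i\in[m]\setminus F}$ is ample, i.e.\ positively spans $\mathbb R^d$. Combinatorial Alexander duality gives
\[
\widetilde H_i(\BStel(Z)) \;\cong\; \widetilde H^{m-i-3}(\BStel(Z)^\vee),
\]
so it suffices to show that $\BStel(Z)^\vee$ is a wedge of $(m-2d-1)$-spheres with number of summands equal to $|\mu(L(\mathcal A))|$, the unsigned M\"obius invariant of the intersection lattice $L(\mathcal A)$ of the central hyperplane arrangement $\mathcal A = \{z_i^\perp\}_{i\in[m]}\subseteq\mathbb R^d$ (a geometric lattice of rank $d$, since ampleness forces essentiality).

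\textbf{Step 2 (Stratification by the intersection lattice).} Organize $\BStel(Z)^\vee$ by the natural map $\phi\colon F\mapsto V_F:=\bigcap_{i\in F}z_i^\perp\in L(\mathcal A)$. For each flat $V$ the stratum $\phi^{-1}(V)$ comprises faces whose defining subspace is exactly $V$ and whose complement stays ample; I aim to show that this stratum is contractible via an application of Theorem~\ref{thmConstelSphere} to the induced subconfiguration on $\mathbb S(V^\perp)$, combined with the condition that the residual configuration transverse to $V$ itself lies in an open hemisphere. A Quillen-type fiber/cofiber argument (or a discrete Morse matching built from a shelling of $L(\mathcal A)$) should then identify $\BStel(Z)^\vee$, up to homotopy, with a space constructed from $\Delta(L(\mathcal A)\setminus\{\hat0,\hat1\})$ joined with a sphere accounting for the $m-2d$ ``extra'' antipodal points beyond a minimal cross-polytope configuration.

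\textbf{Step 3 (Folkman and conclusion).} By Folkman's theorem on geometric lattices,
\[
\Delta\bigl(L(\mathcal A)\setminus\{\hat 0,\hat 1\}\bigr) \;\simeq\; \bigvee_{|\mu|}\mathbb S^{d-2}.
\]
Combining with the join/suspension structure established in Step~2, one obtains $\BStel(Z)^\vee\simeq\bigvee_{|\mu|}\mathbb S^{m-2d-1}$. Alexander duality from Step~1 then yields $\BStel(Z)\simeq\bigvee_{|\mu|}\mathbb S^{2d-2}$, as claimed. In the minimal case $Z=\{\pm e_1,\dots,\pm e_d\}$ this recovers $\BStel(Z)=\partial\Delta^{2d-1}\simeq\mathbb S^{2d-2}$ with $|\mu|=1$.

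\textbf{Main obstacle.} The crux is Step~2: rigorously stratifying $\BStel(Z)^\vee$ over $L(\mathcal A)$ and verifying that the strata have the contractible or spherical local structure required for a fiber theorem. The $(d-2)$-to-$(m-2d-1)$ dimension gap coming out of Folkman must be bridged by a join with an auxiliary sphere encoding the ``excess'' antipodal pairs, and getting this join exactly right is the main combinatorial subtlety. An alternative (and possibly cleaner) route, hinted at by the abstract's reference to Gale duality, is to reformulate ``$[m]\setminus F$ ample'' via the Gale transform $\widehat Z\subset\mathbb R^{m-d}$ as a linear-algebraic condition on $\widehat Z$, bringing the problem into the range of the Goresky--MacPherson formula for subspace arrangement complements.
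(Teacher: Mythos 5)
Your route is genuinely different from the paper's. The paper does not pass to the Alexander dual at all: it defines a monotone map $f\colon\BStel(Z_\Ha)\to\Stel(Z_\Ha)+\bar S(\Ha)^*$ into the \emph{ordered sum} of the constellation complex (as a poset of nonempty faces) and the proper part of the dual intersection lattice, sending a closed-hemisphere face $I$ either to itself (if $I\in\Stel$) or to the ridge $W_I=C_I\cap(-C_I)$ of its positive cone, and then verifies the Quillen fiber condition for $f$ directly. The key computation is that for a proper flat $\Pi$, the fiber $f^{-1}(T_{\leqslant\Pi})$ is the nerve of a covering of all of $V$ by closed half-spaces, hence contractible by the nerve theorem (and this is exactly where antipodality of $Z$ enters). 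The join $|\Stel(Z_\Ha)|\ast|\bar S(\Ha)^*|\simeq S^{d-1}\ast\bigvee_\mu S^{d-2}$ then delivers the answer with no dimension bookkeeping, because the ordered-sum trick produces the join automatically.

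Two concrete gaps in your proposal. First, even if you succeeded in showing $\BStel(Z)^\vee\simeq\bigvee_{|\mu|}S^{m-2d-1}$, combinatorial Alexander duality only returns the \emph{(co)homology} of $\BStel(Z)$, not its homotopy type; to conclude the wedge-of-spheres statement you would additionally need $\BStel(Z)$ to be simply connected (and $2d-2\geqslant 2$) before invoking Hurewicz and Whitehead, and you have not addressed this. (The paper's argument sidesteps this entirely because Quillen's theorem gives a genuine homotopy equivalence.) Second, as you yourself flag, Step~2 is a plan rather than an argument: it is not established that the fibers of your $\phi$ over flats of $L(\mathcal A)$ have the right local structure, nor that a fiber theorem applies, nor how the $(d-2)$-to-$(m-2d-1)$ dimension shift is realized by a join. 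The paper's mechanism for this shift --- the ordered sum $\Stel(Z_\Ha)+\bar S(\Ha)^*$ producing the join $S^{d-1}\ast\bigvee_\mu S^{d-2}$ --- is precisely the missing ingredient, and it lives on the primal side, not the Alexander dual side; it is not obvious that an analogous decomposition exists for $\BStel(Z)^\vee$. Unless both gaps are filled, the proposal does not constitute a proof, though your Step~1 identification of the faces of $\BStel(Z)^\vee$ with complements of ample subconfigurations is correct and your dimension count is consistent.
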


To specify the number of wedge summands, notice that each pair of antipodal points $\{z,-z\}$ on a sphere determines a unique line $l\subset V\cong\Ro^d$ through the origin. The orthogonal complement $l^\bot$ is a hyperplane. Hence antipodal configurations with $2r$ points correspond uniquely to hyperplane arrangements $\ca{H}=\{H_1,\ldots,H_r\}$ in $V$ (where repetitions of hyperplanes are allowed). Each hyperplane arrangement $\ca{H}$ determines the geometric lattice $S(\ca{H})$ of all possible intersections of hyperplanes in the arrangement. This lattice has rank $d$ (provided that $Z$ is ample); it has the least element $\hat{0}$ (the subspace $\{0\}$) and the greatest element $\hat{1}$ (the space $V$ itself). The value $\mu(\ca{H})=\mu_{S(\ca{H})}(\hat{0},\hat{1})$ of the M\"{o}bius function is called \emph{the M\"{o}bius invariant} of the configuration $\ca{H}$. According to~\cite{BjMatr}, the geometrical realization $|S(\ca{H})|$ is homotopy equivalent to the wedge $\bigvee_{\mu(\ca{H})}S^{d-2}$.

The number of wedge summands in Theorem~\ref{thmBigConstelSphere} equals $\mu(\ca{H})$ for the hyperplane arrangement $\ca{H}$ corresponding to the antipodal spherical configuration $Z$.

Our proof of Theorem~\ref{thmBigConstelSphere} is essentially based on the techniques developed in~\cite{BjWel} and extends the main result of that paper in the following sense.

Consider the simplicial complex $\DAG_n$ encoding all directed acyclic graphs on a fixed vertex set $X$ of cardinality $n$. Also consider the simplicial complex $\DisDS_n$ encoding the property of a directed graph to be non-strongly connected, see details in Section~\ref{secDigraphs}. Then the spaces $\DAG_n$ and $\DisDS_n$ appear as a natural example of a constellation complex and a big constellation complex respectively. More precisely, consider the vector space $\Ro^n$ with a fixed basis $e_1,\ldots,e_n$ and consider the collection of points
\begin{equation}\label{eqAsystem}
A_{n-1}=\{\alpha_{ij}=e_j-e_i\mid (i,j) \text{ is an ordered pair in }[n]\}.
\end{equation}
This is basically the root system of type $A$. The points lie in a hyperplane $\Pi=\{\sum x_i=0\}\cong\Ro^{n-1}$, and after suitable normalization we may assume that $A_{n-1}\subset \Ss^{n-2}$. The spherical configuration $A_{n-1}$ is ample and antipodal, the corresponding hyperplane arrangement is the classical hyperplane arrangement of type $A$. We notice the following simple fact.

\begin{prop}\label{propGraphsConstellations}
The following simplicial complexes are naturally isomorphic
\[
\DAG_n=\Stel(A_{n-1}),\qquad \DisDS_n=\BStel(A_{n-1}).
\]
\end{prop}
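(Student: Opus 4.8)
The plan is to reduce each of the two claimed equalities to an elementary statement about real-valued functions on the vertex set $[n]$. Throughout I identify a subset $S$ of the index set of $A_{n-1}$ with the directed graph $G_S$ on $[n]$ having an arc $i\to j$ for each $\alpha_{ij}$ whose index lies in $S$; under this identification the vertex sets of all four complexes become the same set of ordered pairs $(i,j)$ with $i\neq j$.

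First I would invoke the hemisphere criterion: a finite $W\subset\Ss^{n-2}$ lies in an open (resp.\ closed) hemisphere of $\Ss^{n-2}\subset\Pi$ if and only if there is $z\in\Pi\setminus\{0\}$ with $\langle z,w\rangle>0$ (resp.\ $\langle z,w\rangle\geqslant 0$) for every $w\in W$. Since $\langle v,\alpha_{ij}\rangle=v_j-v_i$ depends only on $v\in\Ro^n$ modulo the all-ones vector and $\Ro^n=\Pi\oplus\Ro(1,\dots,1)$, this rephrases as: $S\in\Stel(A_{n-1})$ iff there is $v\in\Ro^n$ with $v_i<v_j$ along every arc of $G_S$, and $S\in\BStel(A_{n-1})$ iff there is a non-constant $v\in\Ro^n$ with $v_i\leqslant v_j$ along every arc of $G_S$. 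The first equality is then immediate: such a strictly increasing $v$ exists precisely when $G_S$ is acyclic, since a directed cycle $i_1\to\cdots\to i_k\to i_1$ would force $v_{i_1}<\cdots<v_{i_1}$, while a topological ordering of an acyclic $G_S$ supplies the desired $v$. Hence $\Stel(A_{n-1})=\DAG_n$.

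The substantive point, and the step I expect to be the only real (though still routine) obstacle, is the closed case: a non-constant weakly increasing $v$ exists if and only if $G_S$ is not strongly connected. If $G_S$ is strongly connected, then for any $i,j$ there are directed paths $i\to\cdots\to j$ and $j\to\cdots\to i$, forcing $v_i=v_j$, so $v$ is constant. Conversely, if $G_S$ is not strongly connected, then its strongly connected components, ordered by reachability, form a poset with at least two elements; any order-preserving injection of this poset into $\Ro$, pulled back so as to be constant on each component, is a non-constant weakly increasing $v$. This gives $\BStel(A_{n-1})=\DisDS_n$. Finally I would observe that all four are genuine simplicial complexes on the common vertex set of ordered pairs --- sub-digraphs of acyclic digraphs are acyclic, deleting arcs cannot create strong connectivity, and any subconfiguration of a configuration contained in a (closed or open) hemisphere is contained in the same hemisphere --- so the isomorphisms are simply the ones sending $\alpha_{ij}$ to $(i,j)$.
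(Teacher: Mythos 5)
Your proof is correct and follows essentially the same route as the paper's (Lemmas~\ref{lemDAGisStel} and~\ref{lemDisDSisBStel}): translate membership in an open or closed hemisphere to the existence of a vector $v$ on $\Pi$ with $v_i<v_j$ (resp.\ non-constant with $v_i\leqslant v_j$) along each arc, then identify this with acyclicity via topological sorting and with failure of strong connectivity via constancy on strong components. The only presentational difference is that the paper establishes the forward inclusions by checking them on the maximal simplices (total DAGs, and the complexes $I_{A_1,A_2}$ attached to two-block ordered partitions), whereas you argue directly for an arbitrary simplex via a topological sort and the condensation poset; both are routine and give the same content.
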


The M\"{o}bius invariant of the hyperplane configuration of type $A$ in $\Ro^n$ equals $(n-1)!$, see~\cite{Vas}. Then, from Theorems~\ref{thmConstelSphere} and~\ref{thmBigConstelSphere} and Proposition~\ref{propGraphsConstellations}, it follows that
\begin{equation}\label{eqHomotopiesGraphs}
\DAG_n\simeq S^{n-2},\qquad \DisDS_n\simeq\bigvee\nolimits_{(n-1)!}S^{2n-4}.
\end{equation}
These two homotopy equivalences are the main result of~\cite{BjWel}.

We observe the following connection, explained in detail in Section~\ref{secOrdOrdersTopologies}. There exists a natural homotopy equivalence, via Galois connection, between the simplicial complex $\DAG_n$ and the poset $\Pos_n$ of all (nontrivial) orders on a given set of cardinality $n$. Similarly, there exists a natural homotopy equivalence between the simplicial complex $\DisDS_n$ and the poset $\PrePos_n$ of all (nontrivial) preorders on a given set of cardinality~$n$. Therefore, we have
\begin{equation}\label{eqHomotopiesGraphs}
|\Pos_n\setminus\{\hat{0},\hat{1}\}|\simeq S^{n-2}\qquad |\PrePos_n\setminus\{\hat{0},\hat{1}\}|\simeq\bigvee\nolimits_{(n-1)!}S^{2n-4},
\end{equation}
which was found by Bouc (for the first equivalence see~\cite{Bouc}, the second is formulated in~\cite{BoucTalk}). Notice that preorders on $X$ correspond bijectively to topologies on $X$, and orders on $X$ correspond to topologies satisfying Kolmogorov $T_0$-separability axiom. The inclusion of (pre)orders naturally corresponds to increasing the strength of topology. If $\Top(X)$ denotes the poset of all topologies on $X$ ordered by strength, and $\TopT0(X)$ is the poset of all $T_0$-topologies, then we get
\begin{equation}\label{eqHomotopiesTopologies}
|\Top(X)\setminus\{\hat{0},\hat{1}\}|\simeq \bigvee_{(n-1)!}S^{2n-4} \qquad |\TopT0(X)\setminus\{\hat{1}\}|\simeq S^{n-2},
\end{equation}
where $\hat{1}$ is the discrete topology on $X$, and $\hat{0}$ is indiscrete topology (the latter is not $T_0$ so there is no need to remove it from $\TopT0(X)$).

In Section~\ref{secGale} we recall a simple observation made by the first author in~\cite{AyzConst}, that constellation complexes appear naturally as combinatorial Alexander dual complexes to nerve complexes of convex polytopes. This observation utilizes the basic properties of the Gale duality. From this point of view, the ``spherical'' complex $\DAG_n=\Stel(A_{n-1})$ can be treated as the dual object to some (generally, non-simple and non-simplicial) convex polytope $\Pc_n$ of dimension $n(n-2)$. This polytope, in a certain sense, encodes all possible cycles on $n$ vertices, see Proposition~\ref{propFacesCycles} for the precise statement.

In the last section we provide preliminary constructions which hopefully pave the way to using the space of directed acyclic graphs (DAGs) in automated machine learning.

\section{Proof of the main theorem}\label{secBigConstel}

\subsection{Preliminaries}\label{subsecQuillen}

Quillen's theorem A for posets appears quite often in our argument, so we recall this result and related statements for convenience of the reader.

A morphism of posets $f\colon S\to T$ is a monotone map, that is $s_1\leqslant_S s_2$ implies $f(s_1)\leqslant_T f(s_2)$. Given two morphisms $f\colon S\to T$ and $g\colon T\to S$ we write them as $f\colon S\rightleftarrows T\colon g$. If $s$ is an element of a poset $S$, then $S_{\geqslant s}$ denotes the subposet $\{t\in S\mid t\geqslant s\}$. The subposets $S_{\leqslant s}$, $S_{>s}$, $S_{<s}$ are defined in a similar fashion.

Any (finite) poset $S$ can be transformed to a CW-complex in a functorial way as follows. Consider the simplicial complex $\ord(S)$ called \emph{the order complex} of $S$, whose vertices are elements of $S$ and simplices are given by chains (linearly ordered subposets) in $S$. The geometrical realization of $\ord(S)$ is called \emph{the geometrical realization} of $S$ and denoted by $|S|$. Notice that any morphism $f\colon S\to T$ induces a cellular map $f_*\colon|S|\to|T|$.

\begin{thm}[Quillen's theorem A or Quillen's fiber theorem~\cite{Quil}]\label{thmQuilMcCord}
Let $f\colon S\to T$ be a morphism of finite partially ordered sets. Suppose that, for any $t\in T$, the geometrical realization  $|f^{-1}(T_{\geqslant t})|$ is contractible. Then $f$ induces a homotopy equivalence between $|S|$ and $|T|$.
\end{thm}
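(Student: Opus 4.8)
The plan is to deduce the statement, via passage to opposite posets, from the ``downward'' version of the fiber theorem, which I would then prove by an explicit filtration of the poset mapping cylinder of $f$. First, it suffices to treat the dual hypothesis: \emph{if $|f^{-1}(T_{\leqslant t})|$ is contractible for every $t\in T$, then $|f|$ is a homotopy equivalence}. Indeed, applying this to $f^{\mathrm{op}}\colon S^{\mathrm{op}}\to T^{\mathrm{op}}$ and using that $|P^{\mathrm{op}}|=|P|$ for any poset $P$, that $|f^{\mathrm{op}}|=|f|$ under these identifications, and that $(f^{\mathrm{op}})^{-1}\big((T^{\mathrm{op}})_{\leqslant t}\big)$ has the same realization as $f^{-1}(T_{\geqslant t})$, recovers the statement as written. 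So from now on assume $|f^{-1}(T_{\leqslant t})|$ is contractible for every $t$.

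Next I would introduce the poset $M$ on the set $S\sqcup T$ in which $S$ and $T$ keep their orders and, for $s\in S$ and $t\in T$, we declare $s\leqslant_M t$ precisely when $f(s)\leqslant_T t$ (transitivity is immediate from monotonicity of $f$). The two inclusions $\iota_S\colon S\hookrightarrow M$ and $\iota_T\colon T\hookrightarrow M$ and the map $\rho\colon M\to T$ with $\rho|_T=\id_T$, $\rho|_S=f$ are all monotone; one has $\rho\iota_T=\id_T$, $\rho\iota_S=f$, and $\iota_T\rho\geqslant\id_M$ pointwise. Since order-comparable maps of posets induce homotopic maps of realizations, $|\iota_T\rho|\simeq\id_{|M|}$, so $|\rho|$ is a homotopy equivalence with inverse $|\iota_T|$; as $|f|=|\rho|\circ|\iota_S|$, the theorem reduces to showing that the inclusion $|\iota_S|\colon|S|\hookrightarrow|M|$ is a homotopy equivalence.

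To see this, fix a linear extension $t_1,\dots,t_N$ of $T$ and set $M_0=S\subseteq M_1\subseteq\dots\subseteq M_N=M$ with $M_k=S\cup\{t_1,\dots,t_k\}$. Each $t_k$ is a maximal element of $M_k$, so by the standard decomposition of an order complex at a maximal element, $|M_k|$ is obtained from $|M_{k-1}|$ by attaching the cone (with apex $t_k$) on the subcomplex $|(M_k)_{<t_k}|$, and a direct inspection of the relations in $M$ identifies $(M_k)_{<t_k}=f^{-1}(T_{\leqslant t_k})\cup T_{<t_k}$. Since attaching a cone on a \emph{contractible} subcomplex is a homotopy equivalence, the proof is complete once we know: for every $t\in T$ the realization $|f^{-1}(T_{\leqslant t})\cup T_{<t}|$ (a subposet of $M$) is contractible. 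This I would prove by induction on $|T_{\leqslant t}|$. If $t$ is minimal, then $T_{<t}$ is empty and the claim is exactly the hypothesis. In general, the inductive hypothesis gives that $|f^{-1}(T_{\leqslant u})\cup T_{<u}|$ is contractible for every $u<_T t$; now start from the contractible space $|f^{-1}(T_{\leqslant t})|$ and adjoin the elements of $T_{<t}$ one at a time along a linear extension, observing that each adjunction attaches a cone on a subcomplex which is again of the form $|f^{-1}(T_{\leqslant u})\cup T_{<u}|$ with $u<_T t$, hence contractible by induction; therefore $|f^{-1}(T_{\leqslant t})\cup T_{<t}|$ is contractible too.

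The delicate point is this last claim: one must verify that the subcomplexes coned off in the filtration are again of exactly this shape (so the induction closes), that $|T_{\leqslant u}|<|T_{\leqslant t}|$ strictly whenever $u<_T t$, and that at every stage one is genuinely attaching a cone on a \emph{subcomplex}. A shorter but less self-contained alternative replaces the filtration by Thomason's homotopy colimit theorem: the poset $\{(t,s)\in T\times S\mid f(s)\leqslant_T t\}$ with the product order is the Grothendieck construction of the diagram $t\mapsto f^{-1}(T_{\leqslant t})$ on $T$, its realization is homotopy equivalent to $|S|$ over $|T|$ (via the section $s\mapsto(f(s),s)$), and so by Thomason's theorem $|S|$ is homotopy equivalent over $|T|$ to the homotopy colimit over $T$ of $t\mapsto|f^{-1}(T_{\leqslant t})|$; as this diagram has contractible values, that homotopy colimit is homotopy equivalent to $|T|$ (the homotopy colimit of the constant one-point diagram), and $|f|$ realizes this equivalence.
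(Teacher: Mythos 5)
The paper does not prove Theorem~\ref{thmQuilMcCord}; it cites Quillen~\cite{Quil} and Barmak~\cite{Barmak} and uses it as a black box, so there is no in-paper argument to compare to. Your proof is correct. The reduction to the dual hypothesis by passing to opposite posets is sound (order complexes of $P$ and $P^{\mathrm{op}}$ coincide, and $(T^{\mathrm{op}})_{\leqslant t}=T_{\geqslant t}$ as sets). The poset $M$ you build is the non-Hausdorff (order-theoretic) mapping cylinder of $f$; the pointwise comparison $\iota_T\rho\geqslant\id_M$ shows $|\rho|$ is a homotopy equivalence with inverse $|\iota_T|$, correctly reducing the theorem to $|\iota_S|$ being a homotopy equivalence. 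Your filtration by a linear extension of $T$, the identification $(M_k)_{<t_k}=f^{-1}(T_{\leqslant t_k})\cup T_{<t_k}$, and the inner induction on $|T_{\leqslant t}|$ all check out: at the stage where $u_j$ is adjoined, the coned-off subcomplex is $f^{-1}(T_{\leqslant u_j})\cup T_{<u_j}$ (the $S$-part because $u_j<_T t$ forces $f(s)\leqslant u_j\Rightarrow f(s)\leqslant t$, the $T$-part because the linear extension of $T_{<t}$ places all of $T_{<u_j}$ before $u_j$), which has strictly smaller $|T_{\leqslant u_j}|$, so the induction closes. One point worth making explicit: the step ``attaching a cone on a contractible subcomplex is a homotopy equivalence'' needs the base to be nonempty (coning off $\varnothing$ adds a disjoint point and changes the homotopy type); this is silently guaranteed because $\varnothing$ is not contractible, so the hypothesis forces each $f^{-1}(T_{\leqslant u})$ to be nonempty. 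Your argument coincides in spirit and nearly in detail with Barmak's proof in~\cite{Barmak}, which likewise goes through the non-Hausdorff mapping cylinder and an induction; Quillen's original proof in~\cite{Quil} is a genuinely different route via bisimplicial objects in the categorical setting. The Thomason-based alternative you sketch is also valid, though it imports far heavier homotopy-colimit machinery than this finite-poset statement requires.
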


We refer to~\cite{Barmak} for a modern exposition of this result, and to~\cite{Bj} for related statements. Sometimes this theorem is called Quillen--McCord theorem, due to its relation to finite topologies studied by McCord~\cite{McCord}.

\begin{defin}\label{defGaloisCon}
A pair of morphisms $f\colon S\rightleftarrows T\colon g$ is called \emph{a(n order preserving) Galois connection}, if one of the two equivalent conditions hold true:
\begin{enumerate}
  \item For any $s\in S$ and $t\in T$ the condition $f(s)\geqslant_T t$ is equivalent to $s\geqslant_S g(t)$.
  \item $g(f(s))\leqslant_S s$ for any $s\in S$ and $f(g(t))\geqslant_T t$ for any $t\in T$.
\end{enumerate}
\end{defin}

Treating posets as small categories, and morphisms as functors, Galois connection becomes a
The next statement can be deduced from Theorem~\ref{thmQuilMcCord} or proved independently by homotopy theoretical reasoning.

\begin{cor}\label{corGalois}
In a Galois connection $f\colon S\rightleftarrows T\colon g$, both maps $f$ and $g$ induce homotopy equivalence between the geometrical realizations $|S|$ and $|T|$.
\end{cor}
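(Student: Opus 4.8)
The plan is to derive Corollary~\ref{corGalois} from Quillen's fiber theorem (Theorem~\ref{thmQuilMcCord}), applied to each of the two maps separately, by using the defining property of a Galois connection to recognize the relevant fibers as posets possessing a least or a greatest element; such posets have contractible realizations, since their order complexes are cones.

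First I would treat $f\colon S\to T$. Fix $t\in T$ and consider the fiber $f^{-1}(T_{\geqslant t})=\{s\in S\mid f(s)\geqslant_T t\}$. By condition~(1) of Definition~\ref{defGaloisCon}, the inequality $f(s)\geqslant_T t$ is equivalent to $s\geqslant_S g(t)$, so $f^{-1}(T_{\geqslant t})=S_{\geqslant g(t)}$. This subposet has least element $g(t)$, hence $\ord\bigl(S_{\geqslant g(t)}\bigr)$ is a cone with apex $g(t)$ and $|f^{-1}(T_{\geqslant t})|$ is contractible. Theorem~\ref{thmQuilMcCord} then shows that $f$ induces a homotopy equivalence between $|S|$ and $|T|$. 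For $g\colon T\to S$ I would use the order-reversed form of Theorem~\ref{thmQuilMcCord}: since the order complex of a poset coincides with the order complex of its opposite, the theorem applies verbatim after reversing all inequalities (apply it to $g\colon T^{\mathrm{op}}\to S^{\mathrm{op}}$), so it suffices that $|g^{-1}(S_{\leqslant s})|$ be contractible for every $s\in S$. Now $g^{-1}(S_{\leqslant s})=\{t\in T\mid g(t)\leqslant_S s\}$, and condition~(1) of Definition~\ref{defGaloisCon}, read from right to left, identifies this with $\{t\in T\mid t\leqslant_T f(s)\}=T_{\leqslant f(s)}$, a poset with greatest element $f(s)$, whose realization is again a cone and hence contractible. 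Thus $g$ too induces a homotopy equivalence.

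Alternatively — and this is the ``homotopy theoretical reasoning'' route mentioned above — one can argue once and for all using the unit and counit of the connection. Condition~(2) of Definition~\ref{defGaloisCon} gives $g(f(s))\leqslant_S s$ for all $s\in S$ and $f(g(t))\geqslant_T t$ for all $t\in T$; combined with the elementary fact that two monotone maps $h_0\leqslant h_1\colon P\to Q$ induce homotopic maps $|P|\to|Q|$ (a homotopy being furnished on realizations by the monotone map $P\times\{0\leqslant1\}\to Q$ sending $(p,\varepsilon)$ to $h_\varepsilon(p)$), this yields that $g_*f_*$ is homotopic to $\id_{|S|}$ and $f_*g_*$ to $\id_{|T|}$, so $f_*$ and $g_*$ are mutually inverse homotopy equivalences.

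There is essentially no serious obstacle here; the statement is soft. The only point requiring care is bookkeeping the variance: a Galois connection is not symmetric under swapping $f$ and $g$, so exactly one of the two fibers comes out as a ``lower'' interval rather than an ``upper'' one, and one of the two invocations of the fiber theorem must therefore be its order-dual version. On the second route, the corresponding point of care is simply the verification of the comparable-maps lemma, which is routine.
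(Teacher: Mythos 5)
Your first route is exactly the paper's argument: compute $f^{-1}(T_{\geqslant t})=S_{\geqslant g(t)}$ via the Galois condition, observe that this poset is a cone, and invoke Quillen's fiber theorem; you also correctly supply the detail that the paper waves at with ``the proof for $g$ is completely similar,'' namely that the map $g$ requires the order-dual form of the fiber theorem, because the Galois condition identifies $g^{-1}(S_{\leqslant s})=T_{\leqslant f(s)}$ rather than $g^{-1}(S_{\geqslant s})$ as a cone. Your alternative route via the unit/counit and the comparable-maps homotopy is precisely what the paper alludes to in the remark immediately following the corollary, so both of your arguments are sound and match the paper's intent.
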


\begin{proof}
By the definition of Galois connection we have
\[
f^{-1}(T_{\geqslant t})=\{s\in S\mid f(s)\geqslant t\} = \{s\in S\mid s\geqslant g(t)\}=S_{\geqslant g(t)}.
\]
The latter poset has the least element $g(t)$, hence its geometrical realization is a cone, hence contractible. Then Quillen--McCord theorem applies. The proof for $g$ is completely similar.
\end{proof}

\begin{rem}
A slightly modified argument shows that the maps $|f|\colon|S|\rightleftarrows|T|\colon |g|$ actually form a homotopy eqiuvalence (that is $|f\circ g|\simeq\id_{|T|}$ and $|g\circ f|\simeq\id_{|S|}$).
\end{rem}

\subsection{Reduction to the geometric lattice}\label{subsecGeomLattice}

Our proof of Theorem~\ref{thmBigConstelSphere} is the extended version of the arguments of~\cite{BjWel} where this result was proved for the particular case $\BStel(A_{n-1})$.

At first we introduce more convenient notation for the objects under consideration. To simplify exposition, we will identify vector space $V\cong\Ro^d$ with its dual $V^*$ by choosing some inner product.

\begin{con}\label{conGeomLatticeMobius}
Let $\Ha=\{H_1,\ldots,H_r\}$ be a linear hyperplane arrangement in $V\cong\Ro^d$, that is a collection of linear hyperplanes (repetitions are allowed). Let $S(\Ha)$ be the set of all possible intersections of $H_i$'s, that is
\[
S(\Ha)=\left\{\bigcap\nolimits_{i\in I}H_i\mid I\subseteq [r]\right\},
\]
where we formally set $\bigcap_{i\in \varnothing}H_i=V$. Repetitions are not allowed in $S(\Ha)$. The set $S(\Ha)$ is partially ordered by inclusion. The greatest element of $S(H)$ is the space $V$ itself, we denote it by $\hat{1}$. The least element $\hat{0}$ of $S(H)$ is the intersection of all hyperplanes $\bigcap_{i\in[r]}H_i$.

It will be assumed in the following that
\begin{equation}\label{eqAssumptionGeneral}
\hat{0}=\{0\},
\end{equation}
which is equivalent to saying that normals to $H_i$'s linearly span $V$. The poset $S(\Ha)$ is a geometric lattice, its elements are graded by dimensions of vector subspaces. The rank of the lattice $S(\Ha)$ equals $d=\dim V$, if assumption~\eqref{eqAssumptionGeneral} holds true. Let us use the notation $\bar{S}(\Ha)=S(\Ha)\setminus\{\hat{0},\hat{1}\}$.

We also need the dual geometric lattice $S(\Ha)^*$. On the abstract level, it is obtained from $S(\Ha)$ by reversing the order. On the other hand, the elements of $S(\Ha)^*$ can be naturally identified with orthogonal complements to elements of $S(\Ha)$. The symbol $\bar{S}(\Ha)^*$ denotes the poset $S(\Ha)^*\setminus\{\hat{0},\hat{1}\}$ of proper elements of the dual lattice.
\end{con}

Let $\mu(\Ha)$ denote the value of the M\"{o}bius function $\mu_{S(\Ha)}(\hat{0},\hat{1})$. The following is the result of Bj\"{o}rner~\cite{BjMatr}, and its homological version was previously proven by Folkman in~\cite{Folk}.

\begin{thm}[{\cite[Thm.7.9.1]{BjMatr}, also see~\cite[Thm.4.1]{Folk}}]\label{thmBjorner}
The geometrical realization of the poset $\bar{S}(\Ha)$ (and hence $\bar{S}(\Ha)^*$) is shellable. It is homotopy equivalent to the wedge of $\mu(\Ha)$ many $(d-2)$-dimensional spheres:
\begin{equation}\label{eqGeomLatticeWedge}
|\bar{S}(\Ha)^*|=|\bar{S}(\Ha)|\simeq\bigvee\nolimits_{\mu(\Ha)}S^{d-2}.
\end{equation}
\end{thm}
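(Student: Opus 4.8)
The plan is to recall the standard proof, which assembles the statement from a few classical facts; the one input with real content is Bj\"orner's theorem that geometric lattices are shellable, which I would use as a black box. First I would dispose of the equality $|\bar S(\Ha)^*|=|\bar S(\Ha)|$: reversing the partial order of a poset does not change which of its subsets are chains, so $\ord(\bar S(\Ha))$ and $\ord(\bar S(\Ha)^*)$ are literally the same abstract simplicial complex, and $\bar S(\Ha)^*$ is merely the opposite poset of $\bar S(\Ha)$. Hence it suffices to analyse $\ord(\bar S(\Ha))$. Since $S(\Ha)$ is a geometric lattice it is graded, and by assumption~\eqref{eqAssumptionGeneral} its rank is $d$; therefore every maximal chain of $\bar S(\Ha)$ has exactly $d-1$ elements, so $\ord(\bar S(\Ha))$ is a \emph{pure} simplicial complex of dimension $d-2$.

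Next I would invoke shellability in the form of an explicit EL-labelling. Fix any linear order $a_1<a_2<\cdots$ on the atoms of $S(\Ha)$ and label a covering relation $x\lessdot y$ by the least index $i$ with $x\vee a_i=y$ (such an index exists because $S(\Ha)$ is atomistic). Bj\"orner~\cite{BjMatr} proved that this is an EL-labelling, so the induced lexicographic order on the maximal chains of $S(\Ha)$ is a shelling of $\ord(\bar S(\Ha))$; in particular this complex is shellable, hence Cohen--Macaulay. A pure shellable complex of dimension $k$ is homotopy equivalent to a wedge of $k$-dimensional spheres, one sphere for each facet whose restriction in the shelling equals the facet itself (see~\cite{Bj}). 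Consequently $|\bar S(\Ha)|\simeq\bigvee_N S^{d-2}$ for some integer $N\geqslant 0$.

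Finally I would pin down $N$ by an Euler-characteristic computation. A wedge of $N$ spheres of dimension $d-2$ has reduced Euler characteristic $(-1)^{d-2}N=(-1)^dN$; on the other hand, by Philip Hall's theorem the reduced Euler characteristic of the order complex of the open interval $(\hat 0,\hat 1)$ of a bounded poset is the value of its M\"obius function. Hence $(-1)^dN=\mu_{S(\Ha)}(\hat 0,\hat 1)$, so $N=(-1)^d\mu_{S(\Ha)}(\hat 0,\hat 1)$, which is precisely the (non-negative) M\"obius invariant $\mu(\Ha)$; as a byproduct, the non-negativity of $N$ recovers Rota's sign theorem for geometric lattices. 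Equivalently, in the EL-shelling the $N$ top-dimensional spheres are indexed by the maximal chains of $S(\Ha)$ along which the edge labels strictly decrease, and a standard computation identifies the number of such ``falling'' chains with $(-1)^d\mu_{S(\Ha)}(\hat 0,\hat 1)$.

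The only step that genuinely requires work is the shellability of geometric lattices, which we do not reprove; everything else is routine homotopy-theoretic and M\"obius-function bookkeeping. The single subtlety to watch is a harmless matter of orientation: the geometric lattice in play is the lattice of flats of the matroid spanned by the normals of $H_1,\dots,H_r$, which one may present with either $\{0\}$ or $V$ as its least element, but by the first paragraph this choice does not affect the order complex, so the conclusion holds identically for $S(\Ha)$, for $S(\Ha)^*$, and for their orthogonal-complement realisations.
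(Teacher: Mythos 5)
The paper does not prove this theorem; it is quoted as a result of Bj\"orner~\cite{BjMatr} and Folkman~\cite{Folk}, so there is no in-paper proof to compare against. Your reconstruction is correct and is essentially Bj\"orner's own argument: observe that reversing the order does not change the order complex, note that a geometric lattice of rank $d$ is graded so the proper part is pure of dimension $d-2$, exhibit the atom-based EL-labelling, conclude shellability and therefore the wedge-of-spheres homotopy type, and finally pin down the number of wedge summands either by counting falling chains or, as you do, by the P.~Hall/Rota identification of the reduced Euler characteristic of $|\bar S(\Ha)|$ with $\mu_{S(\Ha)}(\hat 0,\hat 1)$.

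One remark worth keeping: your computation gives $N=(-1)^d\mu_{S(\Ha)}(\hat 0,\hat 1)$, whereas the paper defines $\mu(\Ha)$ literally as $\mu_{S(\Ha)}(\hat 0,\hat 1)$, which carries the sign $(-1)^d$ for a rank-$d$ geometric lattice. So the paper's $\mu(\Ha)$ should really be read as the unsigned M\"obius invariant $|\mu_{S(\Ha)}(\hat 0,\hat 1)|$; you flag this correctly, and it is a harmless sign convention slip in the paper rather than an error in your argument. Everything else --- the identification of $\ord(\bar S(\Ha))$ with $\ord(\bar S(\Ha)^*)$, the purity, the EL-labelling, and the falling-chain count --- is accurate.
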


Let us return back to collections of points on the unit sphere.

\begin{con}\label{conSphericalFromHyperplanes}
Given a hyperplane arrangement $\Ha=\{H_1,\ldots,H_r\}$, consider the collection of points on the unit sphere
\begin{equation}\label{eqZh}
Z_\Ha=\{z_1,z_{-1},\ldots,z_{r},z_{-r}\}\subset\Ss^{d-1}\subset V,
\end{equation}
where $z_i,z_{-i}$ are the unit normals to the hyperplane $H_i$ for any $i\in[r]$. Repetitions are allowed in $Z_\Ha$. By construction, $Z_\Ha$ is antipodal, and assumption~\eqref{eqAssumptionGeneral} implies that $Z_\Ha$ is ample. It is easily seen that, conversely, any antipodal ample configuration on a sphere is induced by some hyperplane arrangement $\Ha$.
\end{con}

Now let $Z=\{z_1,\ldots,z_m\}$ be a spherical configuration, and $I\subseteq[m]$ be a subset of indices. Consider the convex cone generated by $z_i$ for $i\in I$:
\[
C_I=\Cone\{z_i\mid i\in I\}=\sum_{i\in I}\Rg z_i\subseteq V.
\]
The conditions that a collection of vectors lies in an open or a closed hemisphere can be read from the properties of $C_I$. More precisely, the following hold:
\begin{enumerate}
  \item $I\in \Stel(Z)$ (the vectors $\{z_i\mid i\in I\}$ lie in an open hemisphere) if and only if $C_I$ is a strictly convex cone.
  \item $I\in \BStel(Z)$ (the vectors $\{z_i\mid i\in I\}$ lie in a closed hemisphere) if and only if $C_I\neq V$.
\end{enumerate}
Recall that a cone is called strictly convex if it does not contain a line. It will be convenient for us to introduce some notation which allows to interpolate between items 1 and 2 above.

\begin{con}\label{conEdgeOfCone}
Let $W_I$ denote the greatest (by inclusion) vector subspace of $V$ contained in the cone $C_I$. In other words $W_I=C_I\cap(-C_I)$. We call $W_I$ \emph{the ridge} of $C_I$. The ridge of a strictly convex cone is a single point $\{0\}$. It follows that
\begin{enumerate}
  \item $I\in \Stel(Z)$ if and only if $W_I=\{0\}$.
  \item $I\in \BStel(Z)$ if and only if $W_I\neq V$.
\end{enumerate}
The dimensions of ridges $W_I$, for various $I\subset[m]$, may vary between $0$ and $d$. We will extensively use this freedom in the subsequent arguments.
\end{con}

\begin{lem}\label{lemLandsInSH}
For any $I\in \BStel(Z_\Ha)$, the ridge $W_I$ belongs to $S(\Ha)^*$.
\end{lem}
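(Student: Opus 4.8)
The plan is to show directly that $W_I$ is one of the subspaces out of which $S(\Ha)^*$ is built, namely a sum of some of the lines $\langle z_i\rangle$. By Construction~\ref{conGeomLatticeMobius}, every element of $S(\Ha)^*$ is the orthogonal complement of an intersection $\bigcap_{j\in K}H_j$ with $K\subseteq[r]$; since $H_j=z_j^\bot$, such an element is precisely $\operatorname{span}\{z_j\mid j\in K\}$ (with the convention that the empty intersection gives $V$). So it suffices to prove that $W_I$ equals the span of those vectors $z_i$, $i\in I$, that happen to lie in $W_I$. Writing the index set of $Z_\Ha$ as $\{\pm 1,\ldots,\pm r\}$ with $z_{-i}=-z_i$, and setting $J=\{i\in I\mid z_i\in W_I\}$ and $J'=\{|i|\mid i\in J\}\subseteq[r]$, the target equality is $W_I=\operatorname{span}\{z_i\mid i\in J\}=\operatorname{span}\{z_j\mid j\in J'\}=\bigl(\bigcap_{j\in J'}H_j\bigr)^\bot\in S(\Ha)^*$.

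One inclusion, $\operatorname{span}\{z_i\mid i\in J\}\subseteq W_I$, is immediate because $W_I$ is a vector subspace containing each $z_i$ with $i\in J$. For the reverse inclusion, take any $w\in W_I=C_I\cap(-C_I)$. Since $w\in C_I$ and $-w\in C_I$, write $w=\sum_{i\in I}\lambda_i z_i$ and $-w=\sum_{i\in I}\mu_i z_i$ with all $\lambda_i,\mu_i\geqslant 0$; adding gives $\sum_{i\in I}(\lambda_i+\mu_i)z_i=0$. For any $i_0$ with $\lambda_{i_0}+\mu_{i_0}>0$, solving for $z_{i_0}$ yields $z_{i_0}=-\sum_{i\in I\setminus\{i_0\}}\tfrac{\lambda_i+\mu_i}{\lambda_{i_0}+\mu_{i_0}}\,z_i$, which displays $z_{i_0}$ as an element of $-C_I$; as $z_{i_0}\in C_I$ trivially, $z_{i_0}\in C_I\cap(-C_I)=W_I$, i.e.\ $i_0\in J$. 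In particular every $i$ with $\lambda_i>0$ lies in $J$, so $w=\sum_{i\in J}\lambda_i z_i\in\operatorname{span}\{z_i\mid i\in J\}$. Together with the previous inclusion this gives $W_I=\operatorname{span}\{z_i\mid i\in J\}$, and the first paragraph then shows $W_I\in S(\Ha)^*$.

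The crux --- and essentially the only step needing care --- is the middle paragraph, which is just the classical fact that a face of a polyhedral cone is generated by the generators lying in it, here applied to the lineality space $W_I$, the minimal face of $C_I$; I would give the short self-contained computation above rather than appeal to general face theory. Two minor remarks: first, the hypothesis $I\in\BStel(Z_\Ha)$ is not really used --- the argument is valid for every $I\subseteq[m]$, and if $I\notin\BStel(Z_\Ha)$ then $C_I=V$, so $W_I=V=\hat 1\in S(\Ha)^*$; second, the computation pins down the ridge explicitly as $W_I=\bigl(\bigcap_{j\in J'}H_j\bigr)^\bot$ with $J'=\{|i|\mid i\in I,\ z_i\in W_I\}$, which is likely to be wanted when the map $I\mapsto W_I$ is studied further in the proof of Theorem~\ref{thmBigConstelSphere}.
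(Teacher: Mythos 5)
Your proof is correct and follows the same route as the paper: both set $J=\{i\in I\mid z_i\in W_I\}$ and reduce to the claim $W_I=\operatorname{span}\{z_i\mid i\in J\}$, from which $W_I^\bot=\bigcap_{i\in J}H_i\in S(\Ha)$. The only difference is that the paper asserts this claim in one line (``By construction, nonnegative combinations of $\{z_i\mid i\in J\}$ span the subspace $W_I$''), whereas you supply the short convex-geometric computation justifying it (writing $w,-w\in C_I$, summing, and solving for each $z_{i_0}$ with a positive coefficient to show it lies in $-C_I$ and hence in $W_I$); this fills a real, if minor, gap in the exposition. Your two side remarks are also accurate: the argument needs no hypothesis on $I$ (for $I\notin\BStel(Z_\Ha)$ one gets $W_I=V=\hat 1\in S(\Ha)^*$), and the explicit identification $W_I=\bigl(\bigcap_{j\in J'}H_j\bigr)^\bot$ is a useful by-product.
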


\begin{proof}
Consider the subset $J=\{i\in I\mid z_i\in W_I\}$ which indexes all generators of $C_I$ lying in the ridge. By construction, nonnegative combinations of $\{z_i\mid i\in J\}$ span the subspace $W_I$. It follows in particular that $W_I=\sum_{i\in J} \langle z_i\rangle$. Hence
\[
W_I^\bot=\bigcap_{i\in J}\langle z_i\rangle^\bot=\bigcap_{i\in J}H_i\in S(\Ha).
\]
Therefore $W_I\in S(\Ha)^*$.
\end{proof}

Recall the following construction on posets.

\begin{con}\label{conPlusPosets}
Let $S_1$, $S_2$ be posets with order relations $\leqslant_1$ and $\leqslant_2$ respectively. \emph{The ordered sum} $S_1+S_2$ is a disjoint union $S_1\sqcup S_2$ endowed with the partial order $\leqslant$ such that $s_1\leqslant s_2$ for any $s_1\in S_1$ and $s_2\in S_2$, and $\leqslant$ coincides with $\leqslant_1$ and $\leqslant_2$ on the respective components.

Any chain in $S_1+S_2$ is a concatenation of a chain in $S_1$ and a chain in $S_2$. Therefore, from the definition of the geometrical realization of a poset, it follows that
\begin{equation}\label{eqPlusJoin}
|S_1+S_2|\cong |S_1|\ast|S_2|,
\end{equation}
where $\ast$ is the join operation of topological spaces.
\end{con}

Now we construct a specific map of posets needed for the proof of Theorem~\ref{thmBigConstelSphere}.

\begin{con}\label{conMainMap}
Recall that $\Ha=\{H_1,\ldots,H_r\}$ is a hyperplane arrangement in $V\cong\Ro^d$, $Z_\Ha$ is the corresponding spherical configuration, $\Stel(Z_\Ha)\subset\BStel(Z_\Ha)$ are the constellation complex and the big constellation complex respectively. Both complexes have vertex set $[2r]=\{1,-1,\ldots,r,-r\}$. In the following we consider these complexes as posets, and do not take the empty simplex into account.

Consider the map
\begin{equation}\label{eqMainMap}
f\colon \BStel(Z_\Ha)\to \Stel(Z_\Ha)+\bar{S}(\Ha)^*
\end{equation}
defined by
\[
  f(I)=\begin{cases}
         I, & \mbox{if } I\in \Stel(Z_\Ha) \\
         W_I, & \mbox{otherwise}.
       \end{cases}
\]
Notice that $W_I$ is always an element of $S(\Ha)^*$ by Lemma~\ref{lemLandsInSH}; for $I\in \BStel(Z_\Ha)$ the value $W_I$ is never the element $V=\hat{1}\in S(\Ha)^*$ (Construction~\ref{conEdgeOfCone}), and $W_I\neq \hat{0}\in S(\Ha)^*$ (since in this case $I\in\Stel(Z_\Ha)$ and the map lands in the first summand). Therefore the map $f$ is well defined. It is easily seen that this map is monotonic.
\end{con}

We assert that the map $f$ defined by~\eqref{eqMainMap} satisfies the assumption of Quillen's fiber theorem, more precisely, the assumption with the reversed order.

\begin{thm}\label{thmApplyQuillen}
The map $f\colon \BStel(Z_\Ha)\to T=\Stel(Z_\Ha)+\bar{S}(\Ha)^*$ defined in Construction~\ref{conMainMap} satisfies the property: for any element $t\in T$ the geometrical realization of the preimage $f^{-1}(T_{\leqslant t})$ is contractible.
\end{thm}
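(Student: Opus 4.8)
The plan is to verify the contractibility of $|f^{-1}(T_{\leqslant t})|$ separately for the two kinds of elements $t\in T=\Stel(Z_\Ha)+\bar{S}(\Ha)^*$, using that $f$ is monotone and landing mostly in cone-like subposets.

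\textbf{Case 1: $t=J\in\Stel(Z_\Ha)$ (the first summand).} Since all elements of the second summand $\bar{S}(\Ha)^*$ are strictly greater than every element of the first, we have $T_{\leqslant t}=\Stel(Z_\Ha)_{\leqslant J}$, which consists only of constellations. Thus $f^{-1}(T_{\leqslant t})=\{I\in\BStel(Z_\Ha)\mid f(I)\leqslant J\}$. If $I\in\Stel(Z_\Ha)$ then $f(I)=I$ and the condition is $I\subseteq J$; if $I\notin\Stel(Z_\Ha)$ then $f(I)=W_I\in\bar{S}(\Ha)^*$ which is never $\leqslant J$. Hence $f^{-1}(T_{\leqslant t})=\{I\subseteq J\}$, a simplex (the full power set of $J$), so its realization is a cone and contractible. (Here one uses that $\Stel(Z_\Ha)$, viewed as a poset, contains all nonempty subsets of any of its elements, since a subset of a constellation is a constellation.)

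\textbf{Case 2: $t=U\in\bar{S}(\Ha)^*$ (the second summand).} Now $T_{\leqslant t}$ is all of $\Stel(Z_\Ha)$ together with $\{U'\in\bar{S}(\Ha)^*\mid U'\subseteq U\}$ (using the order on the dual lattice, i.e.\ reverse inclusion of subspaces — I must be careful about the direction here). So $f^{-1}(T_{\leqslant t})$ consists of: all $I\in\Stel(Z_\Ha)$, plus all $I\in\BStel(Z_\Ha)\setminus\Stel(Z_\Ha)$ with $W_I\leqslant U$ in $S(\Ha)^*$. The key idea is to show this subposet is contractible by exhibiting a \emph{poset deformation retraction} onto $\Stel(Z_\Ha)$, which is itself contractible by Theorem~\ref{thmConstelSphere} (as $Z_\Ha$ is ample). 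The natural retraction sends $I$ to the subset $I_0=\{i\in I\mid z_i\notin W_I\}$ of indices whose generators do \emph{not} lie in the ridge; one checks $I_0\in\Stel(Z_\Ha)$ (the remaining generators span a strictly convex cone modulo $W_I$, and the condition $W_I\leqslant U\neq V$ keeps things proper — this needs argument), that $I_0\subseteq I$ so $I_0\leqslant I$, and that $I\mapsto I_0$ is monotone on the subposet in question. Monotonicity combined with $I_0\leqslant I$ gives, via the standard "closure operator / descending closure" argument (e.g.\ Quillen's "$f\leqslant\id$ and $f$ monotone $\Rightarrow$ $f$ a homotopy equivalence onto its image", or equivalently a conical contraction), that $|f^{-1}(T_{\leqslant t})|$ deformation retracts onto $|\Stel(Z_\Ha)|\simeq S^{d-1}$.

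Wait — that would give $S^{d-1}$, not a point, so the retraction target must be corrected: the right statement is that $f^{-1}(T_{\leqslant t})$ retracts not onto all of $\Stel(Z_\Ha)$ but onto the subposet of those $I_0$ that are "compatible with $U$", and this smaller piece must be shown to be a cone. Concretely, after passing to the quotient picture, the elements $I$ in $f^{-1}(T_{\leqslant U})$ with a \emph{fixed} ridge $W$ satisfying $W\leqslant U$ should be reorganized: the ample antipodal configuration $Z_\Ha$ restricted to directions inside $U$ (together with all directions, projected) gives that the relevant subposet has a maximal/minimal-type element or is the join of a contractible piece with something cone-like, using Construction~\ref{conPlusPosets} and the join formula $|S_1+S_2|\cong|S_1|\ast|S_2|$. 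The cleanest route is: decompose $f^{-1}(T_{\leqslant U})$ along the ridge, show each stratum's contribution is a cone with apex the index set of generators spanning a fixed subspace $\subseteq$ (the subspace corresponding to) $U$, and assemble; since a join with a cone is a cone, contractibility follows. \textbf{The main obstacle} I anticipate is exactly this Case 2: correctly identifying the deformation/retraction so that the "spherical" class of $\Stel(Z_\Ha)$ is killed — one must use that the constraint $W_I\leqslant U$ with $U\neq\hat 1$ forces the ridge to live inside a proper subspace, and that adding the full "orthogonal" bundle of directions inside $U$ contributes a cone point. Getting the directions of the order on $S(\Ha)^*$ straight, and verifying monotonicity of the retraction map at the boundary between $\Stel$ and $\BStel\setminus\Stel$, will require care but should be routine once the ridge-stratification is set up.
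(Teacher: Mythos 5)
Your Case 1 is correct and is essentially the paper's argument: for $t=J\in\Stel(Z_\Ha)$ the preimage is the poset of nonempty faces of the simplex $J$, which has a greatest element and is therefore a cone. The substance of the theorem is Case 2, and there your proposal has a genuine gap, which you yourself flag but do not close.

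Two specific problems in Case 2. First, the proposed retraction $I\mapsto I_0=\{i\in I\mid z_i\notin W_I\}$ is not monotone on $f^{-1}(T_{\leqslant U})$: if $I\subseteq J$ then $W_I\subseteq W_J$, so an index $i$ with $z_i\notin W_I$ may nevertheless satisfy $z_i\in W_J$ and thus be dropped from $J_0$. A concrete failure: $I=\{i\}$ has $I_0=\{i\}$, while $J=\{i,-i\}$ has $W_J=\langle z_i\rangle$ and $J_0=\varnothing$, so $I_0\not\subseteq J_0$. So even the half of the argument you thought was routine does not go through. Second, as you correctly observe, even if it were monotone it would retract onto $\Stel(Z_\Ha)\simeq S^{d-1}$, which is the wrong answer. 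Your proposed repair (``decompose along the ridge,'' ``join with a cone is a cone'') is not a proof: it does not say what the strata are, why each stratum is a cone, how the strata are glued, or where the hypothesis $U\neq\hat 1$ and the antipodality of $Z_\Ha$ enter. In fact nothing in your sketch uses antipodality, yet the statement is false without it (see Figure~\ref{figHalfspaces} in the paper).

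The paper's Case 2 argument is of an entirely different kind and is worth internalizing. Fix $\Pi\in\bar S(\Ha)^*$ and set $K_\Pi=f^{-1}(T_{\leqslant\Pi})=\{I\in\BStel(Z_\Ha)\mid W_I\subseteq\Pi\}$. One shows (Proposition~\ref{propFiberNerveHalves}) that $K_\Pi$ equals the nerve of the collection $\ca{A}_\Pi$ of half-spaces in $V$: the closed half-space $\bP_i$ when $z_i\in\Pi$, the open half-space $P_i$ when $z_i\notin\Pi$. Replacing each open $P_i$ by the translated closed half-space $\{\langle u,z_i\rangle\geqslant 1\}$ does not change the nerve (Lemma~\ref{lemTwoNerves}), yielding a closed convex covering $\ca{B}_\Pi$ with $N(\ca{B}_\Pi)=K_\Pi$. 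The nerve theorem then reduces contractibility of $K_\Pi$ to contractibility of $\bigcup_i B_i$. Finally, $\bigcup_i B_i=V$ (Lemma~\ref{lemUnionIsAll}): since $\Pi$ is a proper nonzero flat of the arrangement it contains some line spanned by an antipodal pair $z_i,z_{-i}$ of $Z_\Ha$, and $\bP_i\cup\bP_{-i}=V$ already. This is the one place antipodality is used, and it is exactly the ingredient missing from your sketch. To complete your proof you would need to either reproduce this nerve-of-half-spaces argument or find a genuinely different way to produce a cone point that exploits the presence of both $z_i$ and $-z_i$ inside $\Pi$; the poset-retraction route as you set it up cannot work for the reasons above.
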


We postpone the proof to the next subsection and concentrate on the assertion of Quillen's theorem.

\begin{cor}
The map $f$ induces the homotopy equivalence
\[
|\BStel(Z_\Ha)| \simeq |\Stel(Z_\Ha)+\bar{S}(\Ha)^*|\cong |\Stel(Z_\Ha)|\ast |\bar{S}(\Ha)^*|.
\]
\end{cor}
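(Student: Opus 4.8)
The plan is to obtain this corollary as an immediate consequence of Theorem~\ref{thmApplyQuillen}, by invoking Quillen's fiber theorem once and then rewriting the target via the join formula~\eqref{eqPlusJoin}. So the corollary carries almost no content of its own: the substance is hidden in Theorem~\ref{thmApplyQuillen}, whose proof is postponed.

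The one genuine subtlety is that Theorem~\ref{thmQuilMcCord} is stated in terms of the upper subposets $T_{\geqslant t}$, whereas Theorem~\ref{thmApplyQuillen} provides contractibility of the preimages $f^{-1}(T_{\leqslant t})$ of the lower subposets. I would bridge this by passing to opposite posets: for any poset $P$, the order complex $\ord(P)$ equals $\ord(P^{\mathrm{op}})$ since a subset is a chain in $P$ if and only if it is a chain in $P^{\mathrm{op}}$, so $|P|$ and $|P^{\mathrm{op}}|$ are literally the same CW-complex. The map $f\colon \BStel(Z_\Ha)\to T$ is equally a morphism $\BStel(Z_\Ha)^{\mathrm{op}}\to T^{\mathrm{op}}$, and under this identification one has $(T^{\mathrm{op}})_{\geqslant t}=T_{\leqslant t}$ and $f^{-1}((T^{\mathrm{op}})_{\geqslant t})=f^{-1}(T_{\leqslant t})$, which is contractible for every $t\in T$ by Theorem~\ref{thmApplyQuillen}. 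Hence Theorem~\ref{thmQuilMcCord} applies to $f\colon \BStel(Z_\Ha)^{\mathrm{op}}\to T^{\mathrm{op}}$ and shows that $f$ induces a homotopy equivalence $|\BStel(Z_\Ha)|\simeq |T|=|\Stel(Z_\Ha)+\bar{S}(\Ha)^*|$.

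Finally, the homeomorphism $|\Stel(Z_\Ha)+\bar{S}(\Ha)^*|\cong|\Stel(Z_\Ha)|\ast|\bar{S}(\Ha)^*|$ is exactly~\eqref{eqPlusJoin} from Construction~\ref{conPlusPosets}, applied with $S_1=\Stel(Z_\Ha)$ (regarded as a poset with the empty simplex discarded, as in Construction~\ref{conMainMap}) and $S_2=\bar{S}(\Ha)^*$: every chain in the ordered sum $S_1+S_2$ splits uniquely as a chain in $S_1$ concatenated with a chain in $S_2$, which is precisely the simplicial model of the join. Composing the two steps yields the asserted chain of equivalences. I do not expect any real obstacle in the corollary itself; the only point requiring a moment's care is the order-reversal in the hypothesis of Quillen's theorem, dispatched by the opposite-poset observation above, and (if one wants the strong form) the remark following Corollary~\ref{corGalois}, which shows that $f$ together with a chosen inverse actually realizes a homotopy equivalence of the two realizations rather than merely an abstract one.
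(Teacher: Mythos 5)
Your proposal is correct and follows essentially the same route as the paper, which simply asserts the corollary as an immediate consequence of Theorem~\ref{thmApplyQuillen}, Quillen's theorem ``with the reversed order,'' and the join identity~\eqref{eqPlusJoin}. The one thing you add is an explicit justification of the order-reversal step via $|P|=|P^{\mathrm{op}}|$, which the paper leaves implicit; your passing mention of the remark after Corollary~\ref{corGalois} is slightly misplaced (that remark concerns Galois connections, where an explicit adjoint is available), but it does not affect the argument.
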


Remembering that $|\Stel(Z_\Ha)|\simeq S^{d-1}$ by Theorem~\ref{thmConstelSphere} and $|\bar{S}(\Ha)^*|\simeq \bigvee_{\mu(\Ha)}S^{d-2}$ by Theorem~\ref{thmBjorner}, we obtain the homotopy equivalence
\[
|\BStel(Z_\Ha)|\simeq S^{d-1}\ast\left(\bigvee\nolimits_{\mu(\Ha)}S^{d-2}\right)\simeq \bigvee\nolimits_{\mu(\Ha)}S^{2d-2},
\]
which proves Theorem~\ref{thmBigConstelSphere}.

\subsection{Proof of Theorem~\ref{thmApplyQuillen}}\label{subsecMainProof}

Since the target poset $T$ of the map $f\colon \BStel(Z_\Ha)\to T$ is the ordered sum $\Stel(Z_\Ha)+\bar{S}(\Ha)^*$, the element $t\in T$ is either an element of $\Stel(Z_\Ha)$ or an element of $\bar{S}(\Ha)^*$. These two options are considered separately.

\textbf{Case 1.} $t\in \Stel(Z_\Ha)$. We rename $t$ by $I$, it is a nonempty simplex of the constellation complex. Since $\Stel(Z_\Ha)\subset\BStel(Z_{\Ha})$ and $f$ acts identically on $\Stel(Z_\Ha)$, we have
\[
f^{-1}(T_{\leqslant I})=f^{-1}(\Stel(Z_\Ha)_{\leqslant I})=\BStel(Z_\Ha)_{\leqslant I}
\]
The latter poset has the greatest element, the simplex $I$ itself, hence its geometrical realization is contractible.

\textbf{Case 2.} $t\in \bar{S}(\Ha)^*$. Again, for the sake of soundness, rename $t$ by $\Pi$, this is an element of $\bar{S}(\Ha)^*$, hence a proper vector subspace of $V$. By construction of the map $f$, the subposet
\[
K_\Pi=f^{-1}(T_{\leqslant \Pi})
\]
consists of all simplices $I\in \BStel(Z_\Ha)$ such that ridge $W_I$ of the cone $C_I=\Cone\{z_i\mid i\in I\}$ is contained in the flat $\Pi$. Since $I_1\subseteq I_2$ implies $W_{I_1}\subseteq W_{I_2}$, the subset $K_\Pi$ is a simplicial subcomplex of $\BStel(Z_\Ha)$ (this also follows from the monotonicity of the map $f$).

We need to prove that $K_\Pi$ is contractible. The strategy will be the following: we realize $K_\Pi$ as a nerve of a certain covering of the whole vector space $V$ by closed convex sets, and deduce contractibility of $K_\Pi$ from contractibility of $V\cong\Ro^d$ via the nerve theorem. To realize this strategy, several new constructions are required.

\begin{con}
Let us fix a flat $\Pi\in \bar{S}(\Ha)^*$. For $i\in[2r]$ consider the open and closed halfspaces of $V$ determined by the point $z_i\in \Ss^{d-1}$:
\[
P_i=\{u\in V\mid \langle u, z_i\rangle>0\},\quad \bP_i=\{u\in V\mid \langle u, z_i\rangle\geqslant0\}.
\]
Consider the collection of sets $\ca{A}_\Pi=\{A_i\mid i\in[2r]\}$,
\[
A_i=\begin{cases}
      \bP_i, & \mbox{if } z_i\in \Pi \\
      P_i, & \mbox{if } z_i\notin\Pi.
    \end{cases}
\]
\end{con}

\begin{prop}\label{propFiberNerveHalves}
The simplicial complex $K_\Pi$ coincides with the nerve $N(\ca{A}_\Pi)$ of the collection $\ca{A}_\Pi$. In other words, $I\in K_\Pi$ if and only if $\bigcap_{i\in I}A_i\neq\varnothing$.
\end{prop}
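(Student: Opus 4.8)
Fix a nonempty $I\subseteq[2r]$ and partition it as $I=I'\sqcup I''$ with $I'=\{i\in I\mid z_i\in\Pi\}$ and $I''=\{i\in I\mid z_i\notin\Pi\}$. By the definition of $\ca{A}_\Pi$,
\[
\bigcap_{i\in I}A_i=\{u\in V\mid\langle u,z_i\rangle\geqslant 0\text{ for all }i\in I',\ \langle u,z_i\rangle>0\text{ for all }i\in I''\}.
\]
The core of the proof is the equivalence: this set is nonempty if and only if $W_I\subseteq\Pi$. Since $\Pi$ is a proper subspace of $V$, the condition $W_I\subseteq\Pi$ already forces $W_I\neq V$, hence $I\in\BStel(Z_\Ha)$ by Construction~\ref{conEdgeOfCone}(2); so $W_I\subseteq\Pi$ is literally the membership condition $I\in K_\Pi$ spelled out in Section~\ref{subsecMainProof}. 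Thus it suffices to prove the displayed equivalence, and I would do so by treating the two implications separately.

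For ``$W_I\subseteq\Pi\Rightarrow$ nonempty'' I would produce an explicit witness. Take $u\in\relint(C_I^\vee)$, where $C_I^\vee=\{u\in V\mid\langle u,z_i\rangle\geqslant 0\text{ for all }i\in I\}$ is the (always nonempty) dual cone of $C_I$. By standard polyhedral cone duality, for $u$ in the relative interior of $C_I^\vee$ one has $\langle u,v\rangle=0$ precisely for $v$ in the minimal face of $C_I$, which is its lineality space $W_I$; in particular $\langle u,z_i\rangle>0$ whenever $z_i\notin W_I$ and $\langle u,z_i\rangle=0$ whenever $z_i\in W_I$. (Equivalently, pass to $V/W_I$, where the image of $C_I$ is pointed, pick a functional strictly positive off the origin there, and pull it back.) Now for $i\in I''$ we have $z_i\notin\Pi\supseteq W_I$, so $z_i\notin W_I$ and $\langle u,z_i\rangle>0$; for $i\in I'$ we get $\langle u,z_i\rangle\geqslant 0$ in either case. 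Hence $u\in\bigcap_{i\in I}A_i$.

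For the converse, let $u\in\bigcap_{i\in I}A_i$ and let $w\in W_I=C_I\cap(-C_I)$. Write $w=\sum_{i\in I}\lambda_i z_i$ and $-w=\sum_{i\in I}\mu_i z_i$ with all $\lambda_i,\mu_i\geqslant 0$. Pairing with $u$ yields $\sum_{i\in I}(\lambda_i+\mu_i)\langle u,z_i\rangle=\langle u,w\rangle+\langle u,-w\rangle=0$. Every summand is nonnegative, because $\langle u,z_i\rangle\geqslant 0$ for all $i\in I$, so every summand vanishes; for $i\in I''$ the factor $\langle u,z_i\rangle$ is strictly positive, which forces $\lambda_i=\mu_i=0$. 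Therefore $w=\sum_{i\in I'}\lambda_i z_i$, a combination of vectors each lying in the subspace $\Pi$, so $w\in\Pi$. As $w\in W_I$ was arbitrary, $W_I\subseteq\Pi$, and then $I\in K_\Pi$ as observed above.

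I expect the only delicate step to be the witness construction in the first implication: one must choose the functional so that it is strictly positive on exactly the right generators, and the cleanest way is to quotient out the lineality space $W_I$ (equivalently, invoke $\relint(C_I^\vee)$) and then carefully track which $z_i$ lie in $W_I$, which in $\Pi\setminus W_I$, and which outside $\Pi$ — this is exactly where the hypothesis $W_I\subseteq\Pi$ is used. The converse is an elementary positivity (Farkas-type) argument, and the translation between ``$W_I\subseteq\Pi$'' and ``$I\in K_\Pi$'' is immediate from Construction~\ref{conEdgeOfCone} and the description of $K_\Pi$.
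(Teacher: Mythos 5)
Your proof is correct and follows essentially the same two-step strategy as the paper: for the forward inclusion you exhibit a functional by passing to $V/W_I$ (equivalently, choosing $u\in\relint(C_I^\vee)$), exactly as the paper does; for the reverse inclusion you run a positivity argument. The one genuine (small) improvement is in the reverse direction: the paper first establishes, via a relative-interior argument, the existence of a relation $\sum_{j\in J}a_jz_j=0$ with all $a_j>0$ among the generators lying in $W_I$, and then pairs with $u$; you instead take an arbitrary $w\in W_I$, write $w$ and $-w$ as nonnegative combinations of the $z_i$, add, and pair with $u$ — this yields $W_I\subseteq\Pi$ directly and avoids the auxiliary existence claim. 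Your preliminary remark that $W_I\subseteq\Pi\subsetneq V$ already forces $I\in\BStel(Z_\Ha)$, so the condition ``$W_I\subseteq\Pi$'' alone characterises $K_\Pi$, is also a clean way to see that the nerve condition need not be checked against membership in $\BStel$ separately.
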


\begin{proof}
At first, we prove an inclusion $K_\Pi\subseteq N(\ca{A}_\Pi)$. Let $I\in K_\Pi$ so that $W_I\subseteq \Pi$. The image $C_I/W_I$ of the cone $C_I$ under the natural orthogonal projection $p\colon V\to V/W_I\cong W_I^\bot$ is a strictly convex cone. Therefore, there exists $u\in W_I^\bot$ such that $\langle u, p(z_i)\rangle>0$ for any $z_i\notin W_I$. On the other hand, since $u\in W_I^\bot$, we have $\langle u, p(z_i)\rangle=0$, consequently $\langle u, p(z_i)\rangle\geqslant 0$. Since $W_I\subseteq\Pi$, these inequalities imply
\begin{equation}\label{eqIneqs2types}
\begin{cases}
  \langle u, z_i\rangle\geqslant 0, & \mbox{if } i\in I \mbox{ and } z_i\in \Pi \\
  \langle u, z_i\rangle>0, & \mbox{if } i\in I \mbox{ and } z_i\notin \Pi.
\end{cases}
\end{equation}
This shows that $u$ lies in $\bigcap_{i\in I}A_i$, so this intersection is nonempty. Therefore $I\in N(\ca{A}_\Pi)$.

The rest of the proof is devoted to an inclusion $N(\ca{A}_\Pi)\subseteq K_\Pi$. Let $I\in N(\ca{A}_\Pi)$ so the intersection $\bigcap_{i\in I}A_i$ is nonempty. Pick an element $u\in \bigcap_{i\in I}A_i$. For this element, inequalities~\eqref{eqIneqs2types} hold. As before, consider the cone $C_I$ and the maximal vector subspace $W_I$ of $C_I$. Consider the subset $J\subseteq I$ which labels all vectors of $\{z_i\mid i\in I\}$ which lie in $W_I$. Since $W_I$ is nonnegatively spanned by $\{z_j\mid j\in J\}$, there exists a linear relation of the form
\begin{equation}\label{eqLinRelZero}
\sum_{j\in J}a_jz_j=0,
\end{equation}
where all coefficients $a_j>0$. Indeed, the origin lies in the relative interior of the convex hull of the set $\{z_j\mid j\in J\}$. However this relative interior coincides with the set
\[
\left\{\sum\nolimits_{j\in J}\lambda_jz_j\left| \sum\nolimits_{j\in J}\lambda_j=1, \text{ and } \lambda_j>0 \text{ for any } j\in J\right.\right\},
\]
since the latter subset is open in $W_I$ and its closure coincides with the convex hull.

Taking scalar product of~\eqref{eqLinRelZero} with $u$ we obtain
\begin{equation}\label{eqScalarProdRel}
0=\langle u,0\rangle=\sum_{j\in J} a_j\langle u, z_j\rangle.
\end{equation}
If at least one summand $\langle u, z_j\rangle$ is strictly positive, then the whole expression on the right hand side of~\eqref{eqScalarProdRel} is positive. This observation proves $\langle u, z_j\rangle=0$ for any $z_j\in W_I$. Then any such $z_j$ belongs to $\Pi$, since otherwise we get a contradiction with~\eqref{eqIneqs2types}. Therefore $W_I\subseteq\Pi$ and hence $I\in K_\Pi$.
\end{proof}

Strictly speaking we cannot apply the Nerve theorem directly to the collection $\ca{A}_\Pi$ because of its mixedness: some subsets in this collection are closed and some are open. To remedy this, we consider another collection $\ca{B}_\Pi=\{B_i\mid i\in[2r]\}$ defined by
\[
B_i=\begin{cases}
      \bP_i=\{u\mid \langle u,z_i\rangle\geqslant 0\}, & \mbox{if } z_i\in \Pi \\
      \{u\mid \langle u,z_i\rangle\geqslant 1\}, & \mbox{if } z_i\notin\Pi.
    \end{cases}
\]

\begin{lem}\label{lemTwoNerves}
The nerves of the collections $\ca{A}_\Pi$ and $\ca{B}_\Pi$ coincide:
\[
N(\ca{A}_\Pi)=N(\ca{B}_\Pi).
\]
\end{lem}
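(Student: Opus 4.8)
The plan is to show that the two nerves coincide by exhibiting, for each subset $I\subseteq[2r]$, a mutual implication between nonemptiness of $\bigcap_{i\in I}A_i$ and nonemptiness of $\bigcap_{i\in I}B_i$. The sets indexed by $z_i\in\Pi$ are literally the same in both collections (the closed halfspace $\bP_i$), so the only difference is that an open halfspace $P_i=\{\langle u,z_i\rangle>0\}$ in $\ca{A}_\Pi$ is replaced by the shifted closed halfspace $\{\langle u,z_i\rangle\geqslant 1\}$ in $\ca{B}_\Pi$. The key geometric observation is that both families are invariant, in the appropriate sense, under positive scaling: if $u$ satisfies $\langle u,z_i\rangle>0$ for all $i$ in some index set, then $\lambda u$ satisfies $\langle\lambda u,z_i\rangle\geqslant 1$ for $\lambda$ large enough; and scaling by a positive $\lambda$ preserves all the non-strict inequalities $\langle u,z_i\rangle\geqslant 0$ coming from the $z_i\in\Pi$ part.

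Concretely, I would argue as follows. First suppose $I\in N(\ca{B}_\Pi)$ and pick $u\in\bigcap_{i\in I}B_i$. Then $\langle u,z_i\rangle\geqslant 0$ whenever $z_i\in\Pi$ and $\langle u,z_i\rangle\geqslant 1>0$ whenever $z_i\notin\Pi$; this exactly says $u\in\bigcap_{i\in I}A_i$, so $I\in N(\ca{A}_\Pi)$. Conversely, suppose $I\in N(\ca{A}_\Pi)$ and pick $u\in\bigcap_{i\in I}A_i$, so that $\langle u,z_i\rangle\geqslant 0$ for $i\in I$ with $z_i\in\Pi$ and $\langle u,z_i\rangle>0$ for $i\in I$ with $z_i\notin\Pi$. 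Since $I$ is finite, the minimum $c=\min\{\langle u,z_i\rangle\mid i\in I,\ z_i\notin\Pi\}$ is a strictly positive real (if the index set is empty, nothing needs scaling and $u$ already lies in all the $B_i$). Set $\lambda=1/c>0$ and consider $\lambda u$: then $\langle\lambda u,z_i\rangle=\lambda\langle u,z_i\rangle\geqslant 0$ for the $\Pi$-part, and $\langle\lambda u,z_i\rangle\geqslant \lambda c=1$ for the non-$\Pi$-part, so $\lambda u\in\bigcap_{i\in I}B_i$ and $I\in N(\ca{B}_\Pi)$.

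There is really no serious obstacle here — the statement is a routine bookkeeping lemma whose only content is the positive-homogeneity of the defining inequalities. The one point worth stating carefully is the degenerate case where $\{i\in I\mid z_i\notin\Pi\}=\varnothing$: then $\bigcap_{i\in I}A_i=\bigcap_{i\in I}B_i$ on the nose (all sets involved are the closed halfspaces $\bP_i$), so there is nothing to prove, and in particular one should not divide by an empty minimum. With that caveat noted, the two inclusions above give $N(\ca{A}_\Pi)=N(\ca{B}_\Pi)$, which is the claim; the purpose of the lemma, of course, is that $\ca{B}_\Pi$ is a collection of \emph{closed} convex sets covering $V$, so the nerve theorem can legitimately be applied to it in the next step.
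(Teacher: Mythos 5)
Your proof is correct and follows essentially the same approach as the paper: the inclusion $B_i\subseteq A_i$ gives one direction, and positive rescaling of a point in $\bigcap_{i\in I}A_i$ gives the other, with the same scaling factor $1/\min\{\langle u,z_i\rangle\mid i\in I,\ z_i\notin\Pi\}$. Your explicit handling of the degenerate case where no $z_i\notin\Pi$ appears in $I$ is a small but welcome addition that the paper leaves implicit.
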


\begin{proof}
It is straightforward that $\bigcap_{i\in I}B_i\neq\varnothing$ implies $\bigcap_{i\in I}A_i\neq\varnothing$ since $B_i\subseteq A_i$ for any $i$. On the other hand, if $u\in \bigcap_{i\in I}A_i$, then $\alpha u\in \bigcap_{i\in I}B_i$ for sufficiently large $\alpha>0$. For example $\alpha=1/\min\{\langle u, z_i\rangle\mid z_i\notin \Pi\}$ is sufficient for this observation.
\end{proof}


The nerve theorem applied to the closed covering $\ca{B}_\Pi$ implies that $K_\Pi=N(\ca{B}_\Pi)$ is homotopy equivalent to the union $\bigcup_{i\in[2r]}B_i$. It remains to check that this union is indeed contractible.

\begin{lem}\label{lemUnionIsAll}
The union $\bigcup_{i\in[2r]}B_i$ is the whole space $V$.
\end{lem}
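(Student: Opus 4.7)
The plan is to locate a single pair of antipodal normals that both lie in $\Pi$, and to use just those two indices to cover $V$ by a matched pair of opposite closed halfspaces; all the other $B_i$ will then be irrelevant.

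First I would unpack the hypothesis $\Pi\in\bar{S}(\Ha)^*=S(\Ha)^*\setminus\{\hat{0},\hat{1}\}$. Under the orthogonal-complement identification of Construction~\ref{conGeomLatticeMobius}, the elements $\hat{0}$ and $\hat{1}$ of $S(\Ha)^*$ correspond to $\{0\}$ and $V$, so $\Pi$ is a nonzero proper linear subspace of $V$. Moreover, each element of $S(\Ha)^*$ is the orthogonal complement of some $\bigcap_{i\in I}H_i$, and passing to the complement turns this intersection into the linear span $\sum_{i\in I}\langle z_i\rangle$ of the corresponding normals. Since $\Pi\neq\{0\}$, the index set $I$ must be nonempty, so I can pick some $j\in[2r]$ with $z_j\in\Pi$. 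By the antipodal property of $Z_\Ha$, the point $z_{-j}=-z_j$ is also in $Z_\Ha$, and since $\Pi$ is a linear subspace, $z_{-j}\in\Pi$ as well.

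Next I would observe that both indices $j$ and $-j$ now fall into the first case of the definition of $\ca{B}_\Pi$, so $B_j=\bP_j=\{u\mid\langle u,z_j\rangle\geqslant 0\}$ and $B_{-j}=\bP_{-j}=\{u\mid\langle u,z_j\rangle\leqslant 0\}$ are closed halfspaces bounded by the same hyperplane with opposite orientations. For any $u\in V$ the scalar $\langle u,z_j\rangle$ is either nonnegative or nonpositive, so $B_j\cup B_{-j}=V$; this immediately yields $V\subseteq\bigcup_{i\in[2r]}B_i$ and proves the lemma.

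I do not expect any real obstacle. The only subtle point is the necessity of the assumption $\Pi\neq\hat{0}$: if $\Pi$ were $\{0\}$ then no normal $z_j$ would lie in $\Pi$, every $B_i$ would be of the translated form $\{u\mid\langle u,z_i\rangle\geqslant 1\}$, and the union would miss a neighborhood of the origin. The exclusion of $\hat{0}$ from $\bar{S}(\Ha)^*$ is exactly what prevents this degeneracy.
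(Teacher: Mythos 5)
Your proposal is correct and follows essentially the same route as the paper: both arguments locate a pair of antipodal normals $z_j,z_{-j}\in\Pi$ (the paper phrases this as $\Pi$ containing a line from $S(\Ha)^*$, you via writing $\Pi$ as the span of the normals indexing the corresponding intersection), and then observe that $B_j=\bP_j$ and $B_{-j}=\bP_{-j}$ are complementary closed halfspaces whose union is already all of $V$. Your closing remark on why $\Pi\neq\hat{0}$ is needed is accurate and consistent with the paper's discussion.
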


\begin{proof}
Recall that $\Pi\in S(\Ha)^*\setminus\{\hat{0},\hat{1}\}$, so it contains a line from $S(\Ha)^*$, and consecutively, some pair of antipodal vectors $z_i$ and $z_{-i}=-z_i$ from $Z_\Ha$. The closed subsets $B_i=\bP_i=\{\langle u, z_i\rangle\geqslant 0\}$ and $B_{-i}=\bP_{-i}=\{\langle u, -z_i\rangle\geqslant 0\}$ are a pair of complementary closed halfspaces. The union of these two subsets is already the whole space $V$.
\end{proof}

\begin{rem}
A reader could have noticed that the proof of Lemma~\ref{lemUnionIsAll} is the first and the only place in the arguments where the antipodality of the spherical configuration is essentially used. However, the fact that both $z$ and $-z$ are present in the configuration is crucial in this point. If some lines $\Pi$ from $S(\Ha)$ contain only one of the antipodal vectors, then the union $\bigcup_{i\in[m]}B_i$ (and hence the fiber $K_\Pi=f^{-1}(T_{\leqslant\Pi})$) may be non-contractible. Fig.~\ref{figHalfspaces} shows a simple example when such situation occurs.
\end{rem}

\begin{figure}[h]
\begin{center}
\includegraphics[scale=0.4]{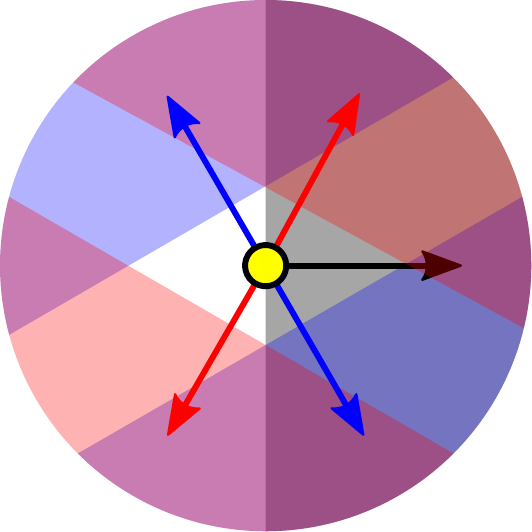}
\end{center}
\caption{One line contains only one vector $z$. The union of closed halfplanes $B_i$ corresponding to the five vectors is homotopy equivalent to the circle $S^1$. This example shows that antipodality of a spherical configuration is crucial in our proof of Theorem~\ref{thmBigConstelSphere}.}\label{figHalfspaces}
\end{figure}

\section{Application to digraphs}\label{secDigraphs}

For a finite set $X$ consider the set $X^{(2)}$ of ordered pairs of distinct elements:
\[
X^{(2)}=\{(x,y)\mid x,y\in X, x\neq y\}.
\]
A directed graph on a vertex-set $X$ is completely determined by a subset $E\subseteq X^{(2)}$, the set of edges. Therefore digraphs on the vertex set $X$ are in one to one correspondents with subsets of~$X^{(2)}$. A property of digraphs on $X$ can be identified with the set of digraphs which have this property, in other words, with collections of subsets of $X^{(2)}$. A property $\Pa$ is called monotonic (downwards) if it is inherited by subgraphs. Monotonic properties correspond to simplicial complexes on the vertex set $X^{(2)}$.

\begin{con}\label{conDefDAG}
A directed graph is called \emph{acyclic} if it does not have directed cycles. Let $\DAG_n$ denote the property of a directed graph on $n$ vertices to be acyclic. The same notation is used for the corresponding simplicial complex on the vertex set $[n]^{(2)}$ of all ordered pairs of indices. The simplicial complex $\DAG_n$ (more precisely, its geometrical realization) can be treated as the space of all directed acyclic graphs on $n$ vertices. This interpretation is explained in more detail in Section~\ref{secSpherDAGs}. The example of the simplicial complex $\DAG_3$ is shown on Fig.~\ref{figDAG3}.
\end{con}

\begin{figure}[h]
\begin{center}
\includegraphics[scale=1]{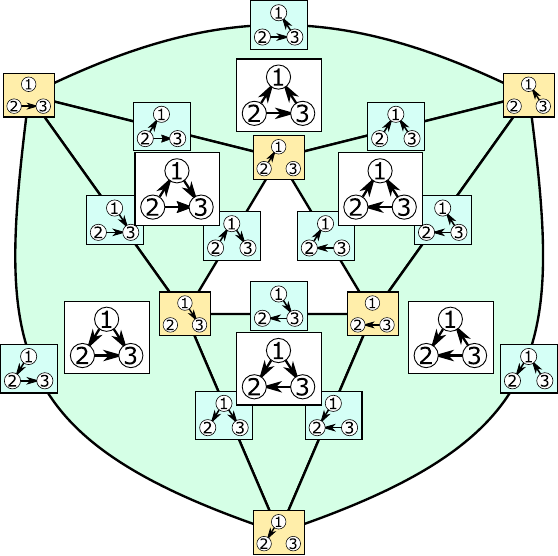}
\end{center}
\caption{The simplicial complex $\DAG_3$ of directed acyclic graphs on $3$ vertices.}\label{figDAG3}
\end{figure}

\begin{con}\label{conDefDisconS}
A directed graph is called \emph{strongly connected} if, for any two vertices $i,j$ there exists a directed path from $i$ to $j$. The property of being strongly connected is monotonic upwards, in the sense that given a strongly connected graph, any bigger graph on the same vertex set is strongly connected as well. Since we want to work with properties monotonic downwards, we consider the negation of strongly connectedness. The property of a graph on $n$ vertices to be not strongly connected as well as the corresponding simplicial complex is denoted $\DisDS_n$. Again, the geometrical realization of this simplicial complex can be treated as the space of all digraphs which fail to be strongly connected. Compare this construction with the space of disconnected undirected graphs studied by Vassiliev in~\cite{Vas}. We have a straightforward inclusion $\DAG_n\subset \DisDS_n$ for $n\geqslant2$, since acyclic digraphs can be alternatively described as the digraphs in which all components of strong connectivity are singletons.
\end{con}

Before studying homotopy types of $\DAG_n$ and $\DisDS_n$, let us indicate some basic properties of these simplicial complexes. We are especially interested in the space $\DAG_n$ of all acyclic digraphs.

Recall the standard notions from the theory of simplicial complexes. Let $K$ be a(n abstract) simplicial complex on a vertex set $V$. For a simplex $I\in K$ the number $\dim I=|I|-1$ is called \emph{the dimension} of a simplex, and $\dim K=\max_{I\in K}\dim I$. A simplex $I$ is called \emph{maximal} if there is no $J\in K$ such that $J\supsetneq I$. A simplicial complex is called \emph{pure} if all its maximal simplices have the same dimension. In a pure simplicial complex, maximal simplices are also called \emph{facets}, and codimension 1 simplices are called \emph{ridges}. A pure simplicial complex $K$ is called \emph{a pseudomanifold} if any ridge is contained in exactly 2 facets. A pure simplicial complex $K$ is called \emph{a pseudomanifold with boundary} if any ridge is contained in either 1 or 2 facets. In this case the union of ridges which are contained in one facet is called \emph{the boundary} of $K$.

\begin{lem}\label{lemPseudoMfd}
The simplicial complex $\DAG_n$ is pure of dimension $(n+1)(n-2)/2$ with $n!$ facets. It is a pseudomanifold with boundary.
\end{lem}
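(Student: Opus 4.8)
The plan is to first pin down the facets of $\DAG_n$ — they turn out to be the transitive tournaments, i.e. the linear orders on $[n]$ — deduce purity, the dimension, and the facet count from that, and then settle the pseudomanifold‑with‑boundary claim by a short edge‑counting argument.

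For the facets, I would show that a directed acyclic graph $G$ on $[n]$ is a maximal simplex of $\DAG_n$ if and only if it is a linear order. If $G$ is not a tournament, pick an unordered pair $\{i,j\}$ not joined by an edge of $G$; adding the edge $(i,j)$ creates a directed cycle only if $G$ already contains a directed path from $j$ to $i$, and in that case adding $(j,i)$ instead creates a cycle only if $G$ contains a directed path from $i$ to $j$ — but the two paths together would already be a cycle in $G$. Hence one of the two edges can be added and $G$ is not maximal. Conversely, a linear order is maximal, since any absent edge is the reverse of a present one and would close a $2$-cycle. Every DAG extends to a linear order (topological sorting, equivalently extending its transitive closure to a total order), so $\DAG_n$ is pure and its facets are exactly the $n!$ linear orders on $[n]$. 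Each such facet has $\binom n2$ edges, hence $\dim\DAG_n=\binom n2-1=(n+1)(n-2)/2$.

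For the pseudomanifold‑with‑boundary claim, let $R$ be a ridge, i.e. an acyclic digraph with $\binom n2-1$ edges contained in some facet. Acyclicity forces $R$ to contain at most one of $(i,j),(j,i)$ for every unordered pair; since $R$ has $\binom n2-1$ edges and there are $\binom n2$ pairs, there is exactly one pair $\{i,j\}$ carrying no edge of $R$, while every other pair carries exactly one. If $L\supseteq R$ is a facet, then $L$ is a linear order with one more edge than $R$, and that extra edge must sit over the pair $\{i,j\}$; thus $L=R\cup\{(i,j)\}$ or $L=R\cup\{(j,i)\}$. So at most two facets contain a given ridge, and at least one does by purity — which is precisely the pseudomanifold‑with‑boundary condition. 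As a by‑product one sees that $R$ lies in two facets exactly when $R$ has no directed path between $i$ and $j$, i.e. when $i,j$ are incomparable in the transitive closure of $R$; otherwise $R$ lies on the boundary (equivalently, the boundary consists of those $R$ whose transitive closure is already a linear order).

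I do not expect a genuine obstacle here. The only step that needs a little care is the characterization of maximal DAGs as linear orders — one must verify that an edge can always be added to a non‑tournament DAG without creating a cycle — after which the dimension $(n+1)(n-2)/2$, the count $n!$, and the ``$1$ or $2$ facets'' dichotomy all drop out of counting edges.
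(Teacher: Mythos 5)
Your argument is correct and follows essentially the same route as the paper's: facets are the transitive tournaments (linear orders) via topological sorting, giving purity, dimension $\binom n2 - 1$, and the count $n!$; and each ridge misses exactly one unordered pair $\{i,j\}$, so it sits in $R\cup\{(i,j)\}$ and/or $R\cup\{(j,i)\}$ and hence in one or two facets. The extra path-concatenation argument you give for non-maximality of non-tournament DAGs is correct but redundant once topological sorting is invoked, and your explicit description of the boundary ridges is a small bonus the paper leaves implicit.
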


\begin{proof}
By a total DAG on $X$ we mean the directed graph of any total order on $X$. Since there exist $n!$ many total orders on $X$, we have $n!$ total DAGs. All total DAGs have $n(n-1)/2$ directed edges. Any acyclic directed graph is contained in some total DAG, since any partial order can be extended to a total order (such extensions are also called topological sortings in computer science literature). Therefore total DAGs are the maximal simplices of $\DAG_n$, they all have dimension $n(n-1)/2-1=(n+1)(n-2)/2$. This proves the first part of the statement.

Notice that any ridge $J$ of $\DAG_n$ is obtained from some total DAG $I$ by removing one oriented edge, say $(u,v)$. For any pair $\{x,y\}\neq\{u,v\}$ exactly one of the ordered pairs, $(x,y)$ or $(y,x)$, is an oriented edge of $J$. If e.g. $(x,y)$ is an edge, then adding $(y,x)$ to $J$ creates an oriented cycle. This shows that the ridge $J$ is contained in at most two candidate facets: $I$ and $I'=J\sqcup\{(v,u)\}$. The directed graph $I$ is acyclic by assumption, and the directed graph $I'$ may fail to be acyclic --- in this case $J$ is the boundary ridge. This proves the second part.
\end{proof}

The simplicial complex $\DisDS_n$ does not have good properties of this sort. However, a description of its maximal simplices will be used in the following.

\begin{con}\label{conMaxSimpDisDS}
Let $(A_1,A_2)$ be a linearly ordered subdivision of the set $[n]$ into nonempty subsets, which means that $A_1\cup A_2=[n]$, $A_1\cap A_2=\varnothing$, $A_1,A_2\neq\varnothing$. Consider the following subset of $[n]^{(2)}$:
\[
I_{A_1,A_2}=A_1^{(2)}\sqcup A_2^{(2)}\sqcup \{(i_1,i_2)\mid i_1\in A_1, i_2\in A_2\}.
\]
In the digraph $\Gamma_{A_1,A_2}=([n],I_{A_1,A_2})$, all pairs of vertices from either $A_1$ or $A_2$ are connected in both directions, and there exists an arrow from any vertex of $A_1$ to any vertex of $A_2$, but not in the opposite direction. It can be seen that any such digraph $\Gamma_{A_1,A_2}$ satisfies the following.
\begin{itemize}
  \item It is not strongly connected (because there is no directed path from a vertex of $A_2$ to a vertex of $A_1$);
  \item It is maximal with respect to this property (if we add any arrow from $A_2$ to $A_1$, the graph becomes strongly connected).
\end{itemize}
Therefore $I_{A_1,A_2}$ is a maximal simplex of $\DisDS_n$. It can be seen that all maximal simplices of $\DisDS_n$ have the form $I_{A_1,A_2}$ for some ordered subdivision $(A_1,A_2)$ of $[n]$.
\end{con}

Since cardinality of $I_{A_1,A_2}$ equals $n(n-1)-|A_1||A_2|$, the maximal simplices have different dimensions when $n\geqslant 3$. Therefore $\DisDS_n$ is not pure.

Now we switch to the homotopy types of $\DAG_n$ and $\DisDS_n$. The following proposition is the main result of~\cite{BjWel}.

\begin{prop}\label{propBjWel}
Assume $n\geqslant 2$.
\begin{enumerate}
  \item The simplicial complex $\DAG_n$ is homotopy equivalent to $S^{n-2}$
  \item The simplicial complex $\DisDS_n$ is homotopy equivalent to the wedge of $(n-1)!$ many $(2n-4)$-dimensional spheres.
\end{enumerate}
\end{prop}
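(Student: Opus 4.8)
The statement is obtained by specializing the results already assembled to the root configuration $A_{n-1}$ of~\eqref{eqAsystem}. First I would invoke Proposition~\ref{propGraphsConstellations} to rewrite $\DAG_n=\Stel(A_{n-1})$ and $\DisDS_n=\BStel(A_{n-1})$, reducing everything to the constellation and big constellation complexes of the spherical configuration obtained by normalizing the vectors $\alpha_{ij}=e_j-e_i$ to unit length inside the hyperplane $\Pi=\{\sum x_i=0\}\cong\Ro^{n-1}$. In particular the ambient dimension here is $d=n-1$.

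Next I would verify the two hypotheses of Theorems~\ref{thmConstelSphere} and~\ref{thmBigConstelSphere}, namely that $A_{n-1}$ is ample and antipodal. Antipodality is immediate since $\alpha_{ij}=-\alpha_{ji}$, each vector occurring with multiplicity one. For ampleness, note that $\sum_{(i,j)}\alpha_{ij}=\sum_j(n-1)e_j-\sum_i(n-1)e_i=0$, while the $\alpha_{ij}$ linearly span $\Pi$; hence the origin lies in the relative interior of $\conv A_{n-1}$, which is precisely the ampleness condition. (For $n=2$ the configuration is $\{\pm\alpha_{12}\}\cong S^0$ and both claims read ``$\simeq S^0$'', which holds as $(2-1)!=1$.) Applying Theorem~\ref{thmConstelSphere} with $d=n-1$ then gives $\DAG_n=\Stel(A_{n-1})\simeq S^{n-2}$, which is part (1). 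Applying Theorem~\ref{thmBigConstelSphere} with $d=n-1$ gives $\DisDS_n=\BStel(A_{n-1})\simeq\bigvee_{\mu(\Ha)}S^{2n-4}$, where $\Ha$ is the hyperplane arrangement associated with $A_{n-1}$, i.e.\ the arrangement of hyperplanes $\{x_i=x_j\}$ restricted to $\Pi$ (the braid arrangement of type $A_{n-1}$).

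The only point that is not a direct citation is the evaluation $\mu(\Ha)=(n-1)!$. I would argue it as follows: the intersection lattice $S(\Ha)$ of the braid arrangement is canonically isomorphic to the lattice of set partitions of $[n]$, and the value of its M\"obius function from $\hat 0$ to $\hat 1$ is the classical $(-1)^{n-1}(n-1)!$; hence the M\"obius invariant, which by Theorem~\ref{thmBjorner} (already incorporated into Theorem~\ref{thmBigConstelSphere}) counts the wedge summands, equals $(n-1)!$, see~\cite{Vas}. Substituting this gives $\DisDS_n\simeq\bigvee_{(n-1)!}S^{2n-4}$, which is part (2). I do not expect any genuine obstacle: the proof is an assembly of Proposition~\ref{propGraphsConstellations}, Theorems~\ref{thmConstelSphere} and~\ref{thmBigConstelSphere}, and the standard M\"obius-function value for the partition lattice; the ampleness verification and the partition-lattice computation are the only places where a short argument is needed.
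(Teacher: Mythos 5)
Your proposal is correct and follows the paper's strategy almost verbatim: identify $\DAG_n$ and $\DisDS_n$ with (big) constellation complexes of $A_{n-1}$, check that $A_{n-1}$ is ample and antipodal, apply Theorems~\ref{thmConstelSphere} and~\ref{thmBigConstelSphere}, and evaluate the M\"obius invariant of the braid arrangement as $(n-1)!$. Your ampleness argument (origin is the barycenter of the $\alpha_{ij}$, which span $\Pi$) and your partition-lattice computation of $\mu(\Ha)$ are both fine and are somewhat more explicit than the paper, which disposes of these points in a sentence.

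The one thing worth flagging: you treat the identifications $\DAG_n=\Stel(A_{n-1})$ and $\DisDS_n=\BStel(A_{n-1})$ as a black-box citation of Proposition~\ref{propGraphsConstellations}, but in the paper that proposition is announced in the introduction without proof, and the actual proof is given precisely inside the proof of Proposition~\ref{propBjWel}, as Lemmas~\ref{lemDAGisStel} and~\ref{lemDisDSisBStel}. In other words, the paper regards Proposition~\ref{propGraphsConstellations} and Proposition~\ref{propBjWel} as being proved by the same argument, and the two lemmas are the substantive content; your write-up routes all of that content through a citation. This is not a logical error given how the paper is stated, but be aware that if asked to prove Proposition~\ref{propBjWel} from scratch you would still owe the two identification lemmas: (a) a simplex of $\Stel(A_{n-1})$ is exactly an acyclic edge set, because a common point of the open halfspaces $\{x_j>x_i\}$ is a linear ordering that topologically sorts the digraph, and conversely a directed cycle forces $0$ into the convex hull of the corresponding roots; (b) a simplex of $\BStel(A_{n-1})$ is exactly a non--strongly-connected edge set, because strong connectivity combined with $x_j\geqslant x_i$ along every edge forces all coordinates equal, contradicting $x\in\Pi\setminus\{0\}$.
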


Let us show how this statement follows from Theorems~\ref{thmConstelSphere} and~\ref{thmBigConstelSphere}.

\begin{con}
Let $e_1,\ldots,e_n$ be the standard basis of the space $\Ro^n$. For any ordered pair $(i,j)\in [n]^{(2)}$ consider the vector $\alpha_{ij}=e_j-e_i$. The collection
\begin{equation}\label{eqAsystem}
A_{n-1}=\{\alpha_{ij}\mid (i,j)\in [n]^{(2)}\}
\end{equation}
is called \emph{a root system of type A}. It lies in the vector space of dimension $n-1$, namely in the hyperplane $\Pi=\left\{\sum x_i=0\right\}$. By scaling the norm in $\Ro^n$ we may assume that $A_{n-1}$ lies on the unit sphere $\Ss^{n-2}\subset\Pi\cong\Ro^{n-1}$.
\end{con}

\begin{figure}[h]
\begin{center}
\includegraphics[scale=0.4]{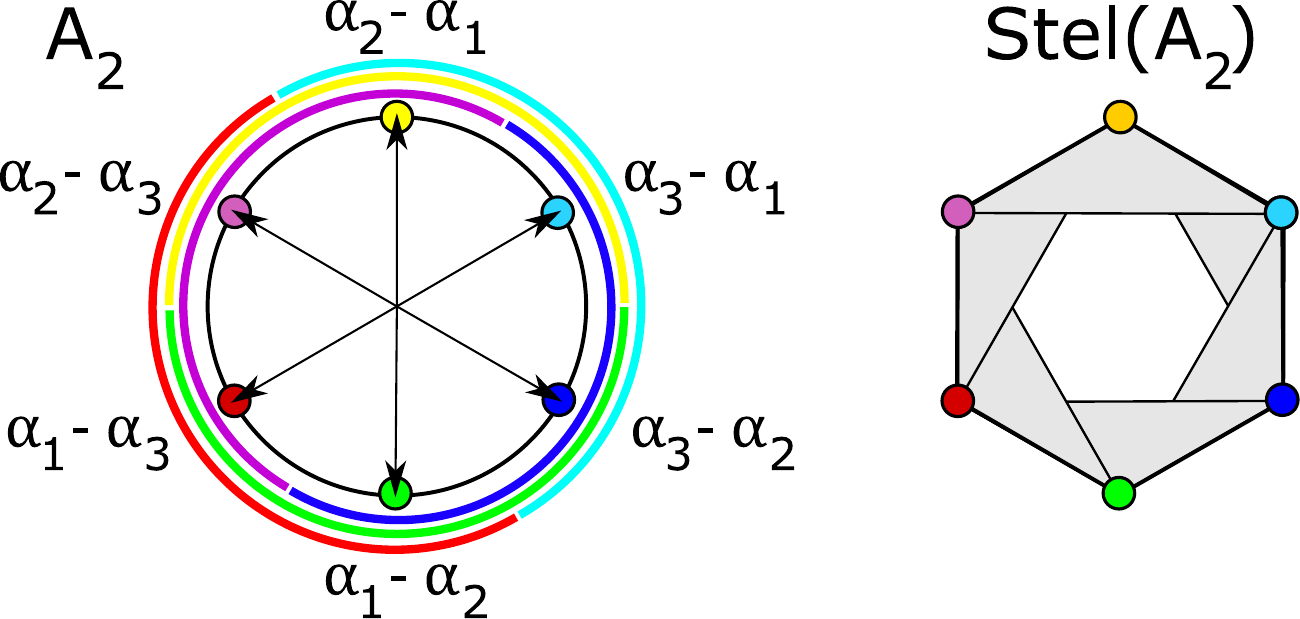}
\end{center}
\caption{The spherical configuration $A_2$ and the corresponding constellation complex. The covering of the unit circle $\Ss^1$ by hemispheres is shown on the left.}\label{figConstelA2}
\end{figure}

\begin{lem}\label{lemDAGisStel}
The simplicial complex $\DAG_n$ is naturally identified with the constellation complex $\Stel(A_{n-1})$ of the root system of type A.
\end{lem}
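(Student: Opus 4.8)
The plan is to exhibit an explicit bijection between simplices of $\DAG_n$ and simplices of $\Stel(A_{n-1})$ that respects inclusion, and then to show it is a simplicial isomorphism. Both complexes live on the same vertex set: the vertices of $\DAG_n$ are ordered pairs $(i,j)\in[n]^{(2)}$ (directed edges), and the vertices of $\Stel(A_{n-1})$ are the points $\alpha_{ij}=e_j-e_i$ of the root system, indexed by the same set $[n]^{(2)}$. So the obvious vertex map is $(i,j)\mapsto\alpha_{ij}$, and the only thing to check is that a subset $E\subseteq[n]^{(2)}$ is a simplex of $\DAG_n$ (i.e.\ the digraph with edge set $E$ is acyclic) if and only if the corresponding vectors $\{\alpha_{ij}\mid (i,j)\in E\}$ lie in a common open hemisphere of $\Ss^{n-2}$.

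First I would recall from Construction~\ref{conEdgeOfCone} (item 1) that $E\in\Stel(A_{n-1})$ iff the cone $C_E=\Cone\{\alpha_{ij}\mid (i,j)\in E\}$ is strictly convex, i.e.\ contains no line, equivalently its ridge $W_E=\{0\}$. So it suffices to prove: \emph{the digraph $([n],E)$ has a directed cycle iff $C_E$ contains a nonzero linear subspace}. For the ``only if'' direction, suppose $i_1\to i_2\to\cdots\to i_k\to i_1$ is a directed cycle in $E$; then $\alpha_{i_1i_2}+\alpha_{i_2i_3}+\cdots+\alpha_{i_ki_1}=(e_{i_2}-e_{i_1})+(e_{i_3}-e_{i_2})+\cdots+(e_{i_1}-e_{i_k})=0$, a vanishing nonnegative combination of some of the $\alpha_{ij}$'s with positive coefficients, which forces these vectors to span a nonzero subspace lying inside $C_E$ (the origin is in the relative interior of their convex hull, exactly as in the argument around~\eqref{eqLinRelZero}). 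For the ``if'' direction, suppose $C_E$ contains a nonzero line, i.e.\ there is a nontrivial relation $\sum_{(i,j)\in E}a_{ij}\alpha_{ij}=0$ with all $a_{ij}\geqslant 0$ and not all zero; restrict attention to the support $F=\{(i,j)\in E\mid a_{ij}>0\}$. Then I would argue the digraph $([n],F)$ has a directed cycle: the relation $\sum_{(i,j)\in F}a_{ij}(e_j-e_i)=0$ says that at every vertex the total incoming weight equals the total outgoing weight, so the positive weights define a circulation; a standard flow-decomposition argument shows a nonempty circulation decomposes into directed cycles, hence $F$ (and a fortiori $E$) contains a directed cycle.

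The cleanest way to package the ``if'' direction is to contrapose: if $([n],E)$ is acyclic, extend the induced partial order to a total order, say a linear order $\prec$ on $[n]$, and pick a strictly $\prec$-increasing function $\varphi\colon[n]\to\Ro$; then the linear functional $u=\sum_i\varphi(i)e_i$ (projected to $\Pi$) satisfies $\langle u,\alpha_{ij}\rangle=\varphi(j)-\varphi(i)>0$ for every $(i,j)\in E$, so all of $\{\alpha_{ij}\mid (i,j)\in E\}$ lie in the open hemisphere centered at $u$, proving $E\in\Stel(A_{n-1})$. Conversely if $E\in\Stel(A_{n-1})$, such a $u$ exists with $\langle u,\alpha_{ij}\rangle>0$ for all $(i,j)\in E$, and then $i\mapsto\langle u,e_i\rangle$ is a real-valued function strictly increasing along every edge of $E$, which immediately rules out directed cycles; so $E\in\DAG_n$. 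This makes both implications short and avoids the flow-decomposition lemma. I expect the only mild subtlety — hardly an obstacle — is bookkeeping the passage between functionals on $\Ro^n$ and functionals on the hyperplane $\Pi$ where $A_{n-1}$ actually lives; since $\alpha_{ij}\in\Pi$ already, any functional on $\Ro^n$ restricts to $\Pi$ and the sign of $\langle u,\alpha_{ij}\rangle$ is unaffected by adding a multiple of $e_1+\cdots+e_n$ to $u$, so this is purely a matter of phrasing. Finally, since the vertex bijection $(i,j)\leftrightarrow\alpha_{ij}$ carries simplices to simplices in both directions, it is a simplicial isomorphism $\DAG_n\cong\Stel(A_{n-1})$, which is the claim; and this is of course the content of the first half of Proposition~\ref{propGraphsConstellations}.
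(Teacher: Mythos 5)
Your ``cleanest'' packaging is essentially the paper's own proof: both use the vertex bijection $(i,j)\leftrightarrow\alpha_{ij}$, both witness a common open hemisphere for an acyclic edge set by choosing a vector whose coordinates respect a linear extension of the induced order, and the reverse direction (your ``strictly increasing along edges, hence no cycle'') is just the contrapositive of the paper's ``a cycle makes $0$ a barycenter of the corresponding roots.'' The flow--decomposition detour you sketch first is a valid but heavier alternative that you yourself correctly set aside; nothing in the final argument has a gap.
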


\begin{proof}
Notice that the open hemisphere $D(\alpha_{ij})$ is the intersection of the unit sphere $\Ss^{n-2}$ with the open half-space
\begin{equation}\label{eqOpHemisphereij}
P_{ij}=\{\langle x,\alpha_{ij}\rangle>0\}=\{x=(x_1,\ldots,x_n)\mid x_j>x_i\}.
\end{equation}
We will show two inclusions $\DAG_n\subseteq \Stel(A_{n-1})$ and $\Stel(A_{n-1})\subseteq\DAG_n$.

Consider a total DAG $I$ on the set $[n]$ (i.e. a maximal simplex of $\DAG_n$). It corresponds to some total order $\prec$ on $[n]$. This total order can be identified with the permutation $\sigma\colon[n]\to[n]$ such that $\sigma(i)\prec\sigma(j)$ if and only if $i<j$ in the natural order on $\{1,\ldots,n\}$. Recalling that $I$ is a subset of $[n]^{(2)}$, we need to prove that hemispheres from the set
\begin{equation}\label{eqSetHemispheres}
\{D(\alpha_{ij})\mid (i,j)\in I\} = \{D(\alpha_{ij})\mid \sigma(i)<\sigma(j)\}
\end{equation}
have nonempty common intersection. For any sequence of real numbers $b_1<\cdots<b_n$ with $\sum b_i=0$, the vector $b_\sigma=(b_{\sigma(1)},\ldots,b_{\sigma(n)})$ lies in the half-spaces $P_{ij}$ for any
\[
(i,j)\in J \Leftrightarrow \sigma(i)\prec \sigma(j) \Leftrightarrow i<j.
\]
Therefore, its normalization $b_\sigma/\|b_\sigma\|$ belongs to all hemispheres from the set~\eqref{eqSetHemispheres}. This proves that $I$ is a simplex of $\Stel(A_{n-1})$.

Now we consider any simplex $J$ of $\Stel(A_{n-1})$ and prove that $J\in\DAG_n$. Assume on the contrary that the directed graph $([n],J)$ has an oriented cycle, say,
\[
i_1\to i_2\to\cdots\to i_s\to i_1.
\]
Then $0$ is the barycenter of the vectors $\alpha_{i_1i_2},\alpha_{i_2i_3},\ldots,\alpha_{i_{s-1}i_s},\alpha_{i_si_1}$. Since the origin lies in the convex hull of the points $\{\alpha_{ij}\mid (i,j)\in J\}$, the corresponding open hemispheres do not intersect. This contradicts to the initial assumption $J\in\Stel(A_{n-1})$.
\end{proof}

\begin{ex}
The root system $A_2$ is shown on Fig.~\ref{figConstelA2}, left, these are the vertices of a regular hexagon. It can be seen that the nerve of the covering by open hemicircles centered in $A_2$ is isomorphic to the simplicial complex $\DAG_3$ shown on Fig.~\ref{figDAG3}.
\end{ex}

\begin{lem}\label{lemDisDSisBStel}
The simplicial complex $\DisDS_n$ is naturally identified with the big constellation complex $\BStel(A_{n-1})$ of the root system of type A.
\end{lem}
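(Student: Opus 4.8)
The plan is to keep the vertex identification $(i,j)\leftrightarrow\alpha_{ij}$ of $[n]^{(2)}$ with $A_{n-1}$ from the proof of Lemma~\ref{lemDAGisStel}, and to show that a subset $J\subseteq[n]^{(2)}$ is a simplex of $\BStel(A_{n-1})$ exactly when the directed graph $([n],J)$ fails to be strongly connected, that is, when $J\in\DisDS_n$. First I would unwind the definition of the big constellation complex: the points $\{\alpha_{ij}\mid(i,j)\in J\}$ lie in a common closed hemisphere $\bD(w)$ with $w\in\Ss^{n-2}\subset\Pi$ if and only if $\langle\alpha_{ij},w\rangle\geqslant 0$ for every $(i,j)\in J$; writing $w=(w_1,\dots,w_n)$ with $\sum_k w_k=0$, this reads $w_j-w_i\geqslant 0$ for every edge $(i,j)\in J$. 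Hence $J\in\BStel(A_{n-1})$ if and only if there is a nonzero $w\in\Pi$ that is non-decreasing along every edge of $J$. (The same statement also follows from item~2 of Construction~\ref{conEdgeOfCone}, which says $J\in\BStel(A_{n-1})$ iff the cone $C_J$ is a proper subcone of $\Pi$, i.e. has a nonzero supporting functional.)

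Next I would discard the linear constraint and the normalization. A nonzero $w\in\Pi$ with the above property exists if and only if a non-constant function $u\colon[n]\to\Ro$ with $u_i\leqslant u_j$ for all $(i,j)\in J$ exists: from such a $u$ one gets $w$ by subtracting the mean $\frac1n\sum_k u_k$ (the result lies in $\Pi$, is still non-constant, hence nonzero, and has the same increments), while conversely a nonzero element of $\Pi$ is automatically non-constant, since a constant vector in $\Pi$ vanishes. Thus the lemma reduces to the combinatorial equivalence: the digraph $([n],J)$ admits a non-constant vertex labelling that is non-decreasing along every edge if and only if $([n],J)$ is not strongly connected. The implication ``$\Rightarrow$'' is immediate, for if the digraph were strongly connected then for any two vertices $i,j$ the directed paths from $i$ to $j$ and back would force $u_i\leqslant u_j$ and $u_j\leqslant u_i$, making $u$ constant.

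For the reverse implication --- which I expect to be the only genuinely non-formal step --- I would build the labelling from a reachability-closed set. If $([n],J)$ is not strongly connected, pick distinct vertices $a,b$ with no directed path from $a$ to $b$, let $R$ be the set of vertices from which $b$ is reachable (with $b\in R$), and put $S=[n]\setminus R$. Then $a\in S$ and $b\notin S$, so $S$ is a nonempty proper subset of $[n]$, and no edge of $J$ leaves $S$: an edge $(i,j)$ with $i\in S$ and $j\in R$ would let $i$ reach $b$ through $j$. Consequently the indicator function $u=\mathbf 1_S$ is non-constant and satisfies $u_i\leqslant u_j$ for every $(i,j)\in J$, which is exactly what is required. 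Finally I would record that ``not strongly connected'' is by definition the property $\DisDS_n$, so $\BStel(A_{n-1})=\DisDS_n$ under the stated vertex identification; as a consistency check, the maximal simplices $I_{A_1,A_2}$ of Construction~\ref{conMaxSimpDisDS} correspond precisely to those labellings $w$ that are constant on $A_1$ and constant, with a strictly larger value, on $A_2$. Together with Lemma~\ref{lemDAGisStel} and Theorems~\ref{thmConstelSphere} and~\ref{thmBigConstelSphere} (using that the M\"obius invariant of the type $A$ arrangement is $(n-1)!$), this completes the deduction of Proposition~\ref{propBjWel}.
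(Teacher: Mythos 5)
Your proof is correct and follows essentially the same strategy as the paper: both directions reduce to the observation that a closed-hemisphere witness $w\in\Pi$ is the same thing as a non-constant vertex labelling that is non-decreasing along every edge, and both produce such a labelling from a ``cut'' of the vertex set and derive a contradiction from strong connectivity by forcing all coordinates to coincide. The only cosmetic difference is that the paper invokes the classification of maximal simplices $I_{A_1,A_2}$ from Construction~\ref{conMaxSimpDisDS} to exhibit the two-valued vector, whereas you build the indicator function of a reachability-closed set directly for an arbitrary non-strongly-connected $J$.
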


\begin{proof}
Similar to the proof of Lemma~\ref{lemDAGisStel}, the closed hemisphere $\bD(\alpha_{ij})$ is the intersection of the unit sphere $\Ss^{n-2}$ with the closed half-space
\[
\bP_{ij}=\{\langle x,\alpha_{ij}\rangle\geqslant0\}=\{x_j\geqslant x_i\}.
\]
We establish two inclusions $\DisDS_n\subseteq \BStel(A_{n-1})$ and $\BStel(A_{n-1})\subseteq\DisDS_n$.

Consider a maximal simplex $I_{A_1,A_2}$ of $\DisDS_n$ for some ordered subdivision $(A_1,A_2)$ of $[n]$ as defined in Construction~\ref{conMaxSimpDisDS}. There exist two numbers $\lambda_1<\lambda_2$ and a vector $b=(b_1,\ldots,b_n)\in\Ro^n$ with the properties
\begin{itemize}
  \item $b_i=\lambda_1$ for $i\in A_1$ and $b_i=\lambda_2$ for $i\in A_2$;
  \item $\sum_{i=1}^{n}b_i=0$ so that $b/\|b\|\in\Ss^{n-2}$.
\end{itemize}
Indeed, one can take $\lambda_1=-|A_2|$ and $\lambda_2=|A_1|$. It can be seen from the definition that the vector $b$ belongs to all closed halfspaces $\bP_{ij}$ for $(i,j)\in I_{A_1,A_2}$. Therefore, the intersection $\bigcup_{(i,j)\in I_{A_1,A_2}}\bD(\alpha_{ij})$ is nonempty. This proves the inclusion $\DisDS_n\subseteq \BStel(A_{n-1})$.

Now we consider any simplex $J$ of $\BStel(A_{n-1})$ so that there exists a point $x=(x_1,\ldots,x_n)\in\Ss^{n-2}$ in the intersection of all $\bP_{ij}$ for $(i,j)\in J$. To prove that $J\in\DisDS_n$ we assume on the contrary that the directed graph $([n],J)$ is strongly connected. Existence of a directed edge $(i,j)\in J$ implies the inequality $x_j\geqslant x_i$. Therefore, whenever there is a directed path from $i$ to $j$ we also have $x_j\geqslant x_i$ by transitivity. Strong connectivity then implies that all $x_i$ should be equal. This contradicts to the conditions $\sum_{i=1}^n x_i=0$ and $x\neq 0$.
\end{proof}

It is easily checked that $A_{n-1}$ is an ample and antipodal spherical configuration for $n\geqslant2$. Therefore Theorems~\ref{thmConstelSphere} and~\ref{thmBigConstelSphere} imply items 1 and 2 of Proposition~\ref{propBjWel} respectively.

\section{The poset of posets and the topology of topologies}\label{secOrdOrdersTopologies}

\subsection{Orders and preorders}\label{subsecOrdPreord}

Let $\Pos_n$ denote the set of all partial orders on the set $X$ of cardinality $n$. There is a natural partial order on $\Pos_n$ given by inclusion of relations. More precisely, we say that $r_1\preccurlyeq r_2$ for $r_1,r_2\in\Pos_n$ if $xr_1y$ implies $xr_2y$ for any pair $x,y\in X$. The poset of all posets $\Pos_n$ has the least element $\hat{0}$, --- the trivial order (that is the order in which any two distinct elements of $X$ are incomparable). We denote $\Pos_n\setminus\{\hat{0}\}$ by $\Pos_n^*$.

\begin{prop}[{\cite[Thm.5.5]{Bouc}}]\label{propBouc}
The geometrical realization $|\Pos_n^*|$ is homotopy equivalent to $S^{n-2}$.
\end{prop}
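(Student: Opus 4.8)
The plan is to exhibit a Galois connection (or a sequence of order-preserving maps satisfying the hypothesis of Quillen's fiber theorem, Theorem~\ref{thmQuilMcCord}) between the poset $\Pos_n^*$ and the constellation complex $\DAG_n=\Stel(A_{n-1})$, viewed as a poset of nonempty simplices. Once such a homotopy equivalence $|\Pos_n^*|\simeq|\DAG_n|$ is established, Proposition~\ref{propBjWel}(1) (equivalently \eqref{eqHomotopiesGraphs}) gives $|\DAG_n|\simeq S^{n-2}$, which finishes the argument.

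The concrete construction is as follows. A nonempty simplex of $\DAG_n$ is a nonempty acyclic subset $E\subseteq[n]^{(2)}$; to it one associates its transitive closure, which is a nontrivial partial order $\overline{E}\in\Pos_n^*$. This gives an order-preserving map $f\colon\DAG_n\to\Pos_n^*$, $f(E)=\overline{E}$. In the other direction, a nontrivial partial order $r\in\Pos_n^*$, regarded as a subset of $[n]^{(2)}$, is itself a (nonempty) acyclic directed graph, so $g(r)=r$ defines an order-preserving map $g\colon\Pos_n^*\to\DAG_n$ (here one must check that $\Pos_n^*$, as a set of relations ordered by inclusion, really is the subposet of those simplices of $\DAG_n$ that are transitively closed; acyclicity of a partial order is clear). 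First I would verify that $(f,g)$ is a Galois connection in the sense of Definition~\ref{defGaloisCon}: the key inequalities are $g(f(E))=\overline{E}\supseteq E$ for every acyclic $E$, and $f(g(r))=\overline{r}=r$ for every partial order $r$ (since $r$ is already transitive). These are exactly condition (2) of Definition~\ref{defGaloisCon} with the inclusion order, so Corollary~\ref{corGalois} applies and yields $|\DAG_n|\simeq|\Pos_n^*|$.

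Alternatively, and perhaps more cleanly, one can apply Quillen's theorem directly to $f$: for a fixed $r\in\Pos_n^*$ the fiber $f^{-1}((\Pos_n^*)_{\leqslant r})=\{E\in\DAG_n\mid\overline{E}\subseteq r\}$ is a downward-closed set of subsets of the edge set of $r$, hence a simplicial cone with apex the empty removal; concretely it has the single maximal element $r$ itself, so its geometric realization is contractible — and symmetrically for $g$. This is the McCord-style argument already used in Corollary~\ref{corGalois}.

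The only genuinely substantive point — and hence the main obstacle, though it is mild — is the bookkeeping identifying $\Pos_n^*$ with the subposet of transitively closed nonempty simplices of $\DAG_n$, together with the (immediate) observation that a partial order contains no directed cycle. The rest is the formal machinery of Galois connections, and since the excerpt has already developed exactly that machinery (Corollary~\ref{corGalois} and the surrounding remark), the proof is short. I would also remark that this is precisely the Galois-connection equivalence between $\DAG_n$ and $\Pos_n$ announced in the introduction, specialized to nontrivial orders; the same scheme, replacing acyclic graphs by non-strongly-connected graphs and orders by preorders, gives $|\PrePos_n\setminus\{\hat0,\hat1\}|\simeq|\DisDS_n|$ and hence Bouc's second equivalence via Theorem~\ref{thmBigConstelSphere}.
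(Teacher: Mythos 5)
Your proposal follows the paper's own route exactly: Construction~\ref{conGaloisOrders} sets up the same Galois connection between $\Pos_n^*$ and $\DAG_n\setminus\{\varnothing\}$ (transitive closure one way, underlying digraph the other), and then Corollary~\ref{corGalois} together with Proposition~\ref{propBjWel}(1) gives the result. One small slip: with your labeling $f\colon\DAG_n\rightleftarrows\Pos_n^*\colon g$, the inequalities you verify ($g(f(E))\supseteq E$, $f(g(r))=r$) are the \emph{reverse} of condition (2) in Definition~\ref{defGaloisCon}, which requires $g(f(s))\leqslant_S s$; the pair that literally satisfies the definition is $g\colon\Pos_n^*\rightleftarrows\DAG_n\setminus\{\varnothing\}\colon f$, which is how the paper writes it ($d\colon\Pos_n^*\rightleftarrows\DAG_n\setminus\{\varnothing\}\colon p$). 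This is purely a labeling matter and does not affect the argument, especially since your direct Quillen-fiber verification (the preimage $f^{-1}((\Pos_n^*)_{\leqslant r})$ has greatest element $r$, hence is contractible) is correct and self-contained.
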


This statement follows from Theorem~\ref{thmConstelSphere}. There is a classical Galois correspondence between the poset $\DAG_n$ of directed acyclic graphs and the poset $\Pos_n$ of partial orders.

\begin{con}\label{conGaloisOrders}
For any DAG $\Gamma$ on the set $X$ consider the partial order $\leqslant_\Gamma$ on $X$ defined as the transitive closure of $\Gamma$. This means that $x\leqslant_\Gamma y$ if and only if there is a directed path from $x$ to $y$ in $\Gamma$ (probably, a trivial path of length $0$). The map $p\colon\DAG_n\to \Pos_n$ given by $p(\Gamma)=\leqslant_\Gamma$ is a morphism of posets.

For any partial order $\leqslant$ on $X$ consider the directed graph $\Gamma_\leqslant$ on $X$ naturally determined by this order. Namely, there is a directed edge from $x$ to $y$ if $x\neq y$ and $x\leqslant y$. The map $d\colon\Pos_n\to \DAG_n$ given by $d(\leqslant)=\Gamma_\leqslant$ is a morphism of posets.

It is easily observed that $p(d(\leqslant))=\leqslant$ for any $\leqslant\in\Pos_n$ and $d(p(\gamma))\geqslant_{\DAG_n}\gamma$ for any $\gamma\in\DAG_n$. Hence $d\colon\Pos_n\rightleftarrows\DAG_n\colon p$ is a Galois connection. Moreover, since $p\circ d=\id_{\Pos_n}$, we have a Galois insertion: the poset $\Pos_n$ can be considered a subposet in $\DAG_n$.

Notice that a nontrivial order always induces a nontrivial graph, and vice versa. Therefore there is also a Galois correspondence
\begin{equation}\label{eqPosDags}
d\colon\Pos_n^*\rightleftarrows\DAG_n\setminus\{\varnothing\}\colon p
\end{equation}
\end{con}

Proposition~\ref{propBouc} now follows from Corollary~\ref{corGalois} and item 1 of Proposition~\ref{propBjWel} (notice that the geometrical realization of any simplicial complex $K$ is homeomorphic to the geometrical realizaton of the poset $K\setminus\{\varnothing\}$ of its nonempty simplices).

In a similar fashion, one can study the poset of all preorders. Let $\PrePos_n$ denote the set of all preorders on a set of cardinality $n$, ordered by inclusion of relations. As before, $\hat{0}$ denotes the empty preorder (the least element of $\PrePos_n$), and $\hat{1}$ denotes the full preorder in which all pairs are comparable (the greatest element of $\PrePos_n$). Let $\PrePos_n^*=\PrePos_n\setminus\{\hat{0},\hat{1}\}$. The next statement appears in~\cite{BoucTalk} without a proof.

\begin{prop}[\cite{BoucTalk}]\label{propBoucTalk}
The geometrical realization $|\PrePos_n^*|$ is homotopy equivalent to $S^{n-2}$.
\end{prop}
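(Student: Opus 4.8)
The plan is to mimic, almost verbatim, the proof of Proposition~\ref{propBouc}, replacing partial orders by preorders and the constellation complex $\DAG_n$ by the big constellation complex $\DisDS_n$. First I would record the Galois correspondence between preorders and digraphs in exact analogy with Construction~\ref{conGaloisOrders}: for a digraph $\Gamma$ on $X$ let $p(\Gamma)$ be its reflexive--transitive closure, i.e. the preorder with $x\preccurlyeq y$ iff there is a directed path from $x$ to $y$ in $\Gamma$; for a preorder $\preccurlyeq$ on $X$ let $d(\preccurlyeq)$ be the digraph with an arrow $x\to y$ whenever $x\neq y$ and $x\preccurlyeq y$. Both maps are monotone, $p\circ d=\id$, and $d\circ p\geqslant\id$, so $(d,p)$ is a Galois insertion.

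Next I would verify that these maps cut down to the punctured posets, which is the only step needing care. A digraph $\Gamma$ is strongly connected exactly when $p(\Gamma)=\hat 1$, and has no arrows exactly when $p(\Gamma)=\hat 0$, so $p$ sends the nonempty simplices of $\DisDS_n$ into $\PrePos_n^*$; conversely a preorder distinct from $\hat 0$ and $\hat 1$ has a nonempty, non-strongly-connected associated digraph, so $d$ sends $\PrePos_n^*$ into $\DisDS_n\setminus\{\varnothing\}$. This produces a Galois connection $d\colon\PrePos_n^*\rightleftarrows(\DisDS_n\setminus\{\varnothing\})\colon p$, the exact analogue of~\eqref{eqPosDags}. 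Then Corollary~\ref{corGalois} gives $|\PrePos_n^*|\simeq|\DisDS_n\setminus\{\varnothing\}|\cong|\DisDS_n|$, and Proposition~\ref{propBjWel}(2) identifies the right-hand side.

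There is no real obstacle in this route: the one substantive ingredient, the homotopy type of $\DisDS_n$, is already in hand, and the remainder is the bookkeeping at $\hat 0$, $\hat 1$ indicated above. I should flag, however, that this --- the only natural analogue of the argument for Proposition~\ref{propBouc}, since the poset of orders is Galois-dual to $\DAG_n$ while the poset of preorders is Galois-dual to $\DisDS_n$ --- yields $|\PrePos_n^*|\simeq\bigvee_{(n-1)!}S^{2n-4}$, and these two complexes have different homotopy types once $n\geqslant 3$. Accordingly I read the ``$S^{n-2}$'' in the statement as a misprint for $\bigvee_{(n-1)!}S^{2n-4}$, consistent with the second equivalence in~\eqref{eqHomotopiesTopologies} under the bijection between preorders and topologies (and evidently carried over from Proposition~\ref{propBouc}).
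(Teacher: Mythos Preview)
Your proposal is correct and follows essentially the same route as the paper: Construction~\ref{conGaloisPreorders} sets up precisely the Galois insertion $d\colon\PrePos_n^*\rightleftarrows\DisDS_n\setminus\{\varnothing\}\colon p$ you describe, and the paper then invokes Corollary~\ref{corGalois} together with item~2 of Proposition~\ref{propBjWel}, exactly as you do. You are also right that the ``$S^{n-2}$'' in the statement is a misprint for $\bigvee_{(n-1)!}S^{2n-4}$, as confirmed by~\eqref{eqHomotopiesGraphs} and~\eqref{eqHomotopiesTopologies} and by the very fact that the paper's own argument appeals to Proposition~\ref{propBjWel}(2).
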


\begin{con}\label{conGaloisPreorders}
Again, we construct a Galois correspondence between proper preorders on $X$ and certain digraphs on $X$. Any preorder is a naturally a digraph. On the other hand, any digraph can be complemented to a preorder by taking its transitive closure. Notice that a graph is strongly connected if and only if its transitive closure is the full preorder $\hat{1}\in\PrePos_n$. Therefore, we also have a Galois correspondence

\begin{equation}\label{eqPrePosDisDS}
d\colon\PrePos_n^*\rightleftarrows\DisDS_n\setminus\{\varnothing\}\colon p.
\end{equation}
\end{con}

Proposition~\ref{propBoucTalk} follows from Corollary~\ref{corGalois} and item 2 of Proposition~\ref{propBjWel}.

\subsection{Topology of topologies on a finite set}\label{subsecTopTops}

For a finite set $X$ of cardinality $n$ consider the set $\Top_n$ of all topologies on $X$. This set is partially ordered by the strength of topology: we say that $\ca{T}_1\leqslant\ca{T}_2$ if any open subset of the topology $\ca{T}_1$ is also an open subset of the topology $\ca{T}_2$. The poset $\Top_n$ has the greatest element $\hat{1}$ (the discrete topology on $X$), and the least element $\hat{0}$ (the indiscrete topology on $X$). Notice that $\Top_n$ is actually a lattice~\cite{Steiner,LarAnd}. However, it is not a modular lattice for $n\geqslant 3$ \cite[Th.3.1]{Steiner}, in particular it is not geometric, so Theorem~\ref{thmBjorner} is not applicable to describe the homotopy type of this lattice.

Recall that topology $\ca{T}$ is said to satisfy Kolmogorov separation axiom $T_0$ if, for any two distinct points $x,y\in X$, $x\neq y$ there exists either an open neighborhood $U$ of $x$ such that $y\notin U$ or an open neighborhood $U$ of $y$ such that $x\notin U$. All $T_0$-topologies form a subposet of $\Top_n$ which we denote by $\TopT0_n$. Notice that $\hat{1}\in\TopT0_n$ while $\hat{0}\notin\TopT0_n$.

Results of the previous subsection imply the following proposition.

\begin{prop}\label{propTopologies}
\begin{enumerate}
  \item The geometrical realization of the poset $\Top_n\setminus\{\hat{0},\hat{1}\}$ is homotopy equivalent to the wedge of $(n-1)!$ many $(2n-4)$-dimensional spheres.
  \item The geometrical realization of the poset $\TopT0_n\setminus\{\hat{1}\}$ is homotopy equivalent to $S^{n-2}$.
\end{enumerate}
\end{prop}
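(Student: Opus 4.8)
The plan is to reduce both items of Proposition~\ref{propTopologies} to the homotopy equivalences for the posets of orders and preorders established in Propositions~\ref{propBouc} and~\ref{propBoucTalk}, via the classical dictionary between finite topologies and preorders (the Alexandrov correspondence). Recall that on a finite set $X$, a topology $\ca{T}$ is completely determined by the preorder $x\leqslant_\ca{T} y \iff x\in\overline{\{y\}}$ (equivalently, $U_x\subseteq U_y$ where $U_x$ is the minimal open set containing $x$); conversely every preorder arises this way. First I would verify that this correspondence is an \emph{anti-isomorphism} (or isomorphism, depending on convention) of posets: strengthening a topology corresponds to refining the associated preorder. The only care needed is to pin down which direction the order goes and to check that the extreme elements match up: the indiscrete topology $\hat{0}\in\Top_n$ corresponds to the full (trivial-in-the-other-sense) preorder, the discrete topology $\hat{1}$ corresponds to the equality relation. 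Under the normalization used for $\PrePos_n$ in Subsection~\ref{subsecOrdPreord}, where $\hat{0}$ is the empty preorder and $\hat{1}$ the full preorder, this is an order-reversing bijection $\Top_n\to\PrePos_n$ sending $\{\hat{0},\hat{1}\}$ to $\{\hat{1},\hat{0}\}$.

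For item 1, this anti-isomorphism restricts to an anti-isomorphism $\Top_n\setminus\{\hat{0},\hat{1}\}\cong\PrePos_n\setminus\{\hat{0},\hat{1}\}=\PrePos_n^*$. Since the order complex, and hence the geometrical realization, of a poset is unchanged (up to simplicial isomorphism) by reversing the order, we get $|\Top_n\setminus\{\hat{0},\hat{1}\}|\cong|\PrePos_n^*|$. Wait --- I need to reconcile this with the stated conclusion: Proposition~\ref{propBoucTalk} gives $|\PrePos_n^*|\simeq S^{n-2}$, whereas item 1 claims a wedge of $(n-1)!$ spheres $S^{2n-4}$. So in fact the correct reference is not Proposition~\ref{propBoucTalk} but the homotopy type coming from $\DisDS_n$; that is, I would instead invoke the Galois correspondence~\eqref{eqPrePosDisDS} together with item 2 of Proposition~\ref{propBjWel}, which via Corollary~\ref{corGalois} gives $|\PrePos_n^*|\simeq|\DisDS_n\setminus\{\varnothing\}|\simeq\bigvee_{(n-1)!}S^{2n-4}$. (The appearance of the phrase ``homotopy equivalent to $S^{n-2}$'' in the statement of Proposition~\ref{propBoucTalk} must be a typo in the excerpt; the consistent chain of reasoning is the one through $\DisDS_n$.) Chaining the anti-isomorphism with this equivalence yields item 1.

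For item 2, the same anti-isomorphism restricts to $\TopT0_n\cong\Pos_n$: indeed a topology satisfies $T_0$ precisely when its associated preorder is antisymmetric, i.e.\ a partial order. Under this correspondence the discrete topology $\hat{1}\in\TopT0_n$ goes to the trivial (discrete) order, which is the least element $\hat{0}$ of $\Pos_n$; the indiscrete topology is not $T_0$ and simply does not appear, matching the remark in the excerpt that there is no need to remove $\hat{0}$ from $\TopT0_n$. Hence $\TopT0_n\setminus\{\hat{1}\}\cong\Pos_n\setminus\{\hat{0}\}=\Pos_n^*$ as posets (up to order reversal), so their geometrical realizations are homeomorphic, and Proposition~\ref{propBouc} gives $|\TopT0_n\setminus\{\hat{1}\}|\simeq S^{n-2}$.

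I expect the main obstacle to be purely bookkeeping: getting the order-direction conventions consistent across three different normalizations (topologies by strength, preorders by inclusion of relations, and the two ``hat'' conventions for extreme elements), and making sure the sets of removed elements correspond correctly under the bijection. There is no deep geometry in this last proposition --- all the real work was already done in Sections~\ref{secBigConstel}--\ref{secOrdOrdersTopologies} --- so the argument is essentially a careful application of the Alexandrov dictionary plus the invariance of $|S|$ under $S\mapsto S^{\mathrm{op}}$.
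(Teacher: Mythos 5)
Your proof is correct and takes essentially the same route as the paper: reduce to $\PrePos_n$ and $\Pos_n$ via the Alexandrov correspondence (the paper's Lemma~\ref{lemTopPosets}) and cite the homotopy types of those posets. You have also rightly caught the typo in Proposition~\ref{propBoucTalk} (whose conclusion should be the wedge $\bigvee_{(n-1)!}S^{2n-4}$, consistent with the Introduction), and you are correct that the correspondence is order-\emph{reversing} rather than an isomorphism as Lemma~\ref{lemTopPosets} loosely asserts --- a harmless distinction since a poset and its opposite have the same order complex. One small sign slip on your side: $x\in\overline{\{y\}}$ is equivalent to $U_x\supseteq U_y$, not $U_x\subseteq U_y$; since you later pin down the extremes independently, this does not affect your argument.
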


This is a straightforward consequence of the basic correspondence between preorders and Alexandrov topologies on a given set. However, we couldn't find the result of Proposition~\ref{propTopologies} explicitly stated in the existing literature, so we included the explanation for the completeness of exposition.

\begin{lem}\label{lemTopPosets}
There are isomorphisms of posets $\PrePos_n\cong\Top_n$ and $\Pos_n\cong\TopT0_n$.
\end{lem}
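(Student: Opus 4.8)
The statement to prove is Lemma~\ref{lemTopPosets}: there are poset isomorphisms $\PrePos_n\cong\Top_n$ and $\Pos_n\cong\TopT0_n$. This is the classical Alexandrov correspondence between preorders and Alexandrov topologies, restricted to a finite set (where every topology is automatically Alexandrov, i.e. closed under arbitrary intersections of opens). The whole proof is a routine bookkeeping exercise, but I would organize it carefully as follows.

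\medskip

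\noindent\textbf{Step 1: The two maps.} Fix a set $X$ with $|X|=n$. Given a preorder $r$ on $X$ (a reflexive, transitive relation), define a topology $\ca{T}(r)$ on $X$ by declaring a subset $U\subseteq X$ to be open iff it is an \emph{up-set} (an ``upper set'') for $r$: $x\in U$ and $xry$ imply $y\in U$. Conversely, given a topology $\ca{T}$ on the finite set $X$, define a relation $r(\ca{T})$ by $xr(\ca{T})y$ iff every open set containing $x$ also contains $y$; equivalently, $y$ lies in the minimal open neighborhood $U_x=\bigcap\{U\in\ca{T}\mid x\in U\}$ of $x$ (this intersection is itself open because $X$ is finite). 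I would first check that $\ca{T}(r)$ is genuinely a topology: it contains $\varnothing$ and $X$, is closed under arbitrary unions (a union of up-sets is an up-set) and arbitrary intersections, hence in particular under finite intersections. Then check $r(\ca{T})$ is reflexive (trivially) and transitive (immediate from the definition via neighborhoods), so it is a preorder.

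\medskip

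\noindent\textbf{Step 2: Mutual inverses.} Verify $r(\ca{T}(r))=r$ and $\ca{T}(r(\ca{T}))=\ca{T}$. For the first: $x\,r(\ca{T}(r))\,y$ means every $\ca{T}(r)$-open set containing $x$ contains $y$; applying this to the smallest up-set containing $x$, namely $\{z\mid xrz\}$, gives $xry$, and conversely if $xry$ then every up-set containing $x$ contains $y$. For the second: the $\ca{T}(r(\ca{T}))$-open sets are exactly the up-sets of $r(\ca{T})$; one checks a subset is an up-set of $r(\ca{T})$ iff it is a union of minimal neighborhoods $U_x$ iff (using finiteness) it is $\ca{T}$-open. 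So the two maps are mutually inverse bijections.

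\medskip

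\noindent\textbf{Step 3: Order-preservation in both directions.} This is what upgrades the bijection to a poset isomorphism. If $r_1\preccurlyeq r_2$ (i.e. $r_1\subseteq r_2$ as relations), then every up-set for $r_2$ is an up-set for $r_1$, so $\ca{T}(r_2)\subseteq\ca{T}(r_1)$ --- wait, this says the map is order-\emph{reversing}. I need to be careful about conventions: the correct convention, matching the paper's statement that ``inclusion of (pre)orders corresponds to increasing the strength of topology'', is to take opens to be \emph{down-sets} of $r$ instead of up-sets (or equivalently to dualize $r$). With the down-set convention, $r_1\subseteq r_2$ gives that every down-set of $r_1$ is a down-set of $r_2$, hence $\ca{T}(r_1)\subseteq\ca{T}(r_2)$, i.e. the map is monotone; and the inverse is monotone by the same argument. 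Thus $\PrePos_n\cong\Top_n$ as posets, with $\hat0$ (trivial preorder = only reflexive pairs) corresponding to $\hat0$ (indiscrete topology) and $\hat1$ (full preorder) corresponding to $\hat1$ (discrete topology).

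\medskip

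\noindent\textbf{Step 4: Restriction to orders and $T_0$-topologies.} Finally, observe that under this isomorphism a preorder $r$ is a partial order (i.e. additionally antisymmetric) iff $\ca{T}(r)$ satisfies the $T_0$ axiom. Indeed, antisymmetry fails exactly when there are distinct $x\ne y$ with $xry$ and $yrx$; this happens iff $x$ and $y$ lie in exactly the same open sets, which is precisely the failure of $T_0$. Hence the isomorphism $\PrePos_n\cong\Top_n$ restricts to an isomorphism $\Pos_n\cong\TopT0_n$ of the corresponding subposets. Note that $\hat1\in\TopT0_n$ (the discrete topology is $T_0$) while $\hat0\notin\TopT0_n$ (the indiscrete topology is not $T_0$ for $n\geqslant2$), consistent with the notation in the text.

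\medskip

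\noindent\textbf{Main obstacle.} There is no real mathematical difficulty here --- the only thing to get right is the \emph{variance convention} (Step 3): one must consistently choose up-sets vs.\ down-sets (equivalently, $r$ vs.\ $r^{\mathrm{op}}$) so that the bijection is order-preserving rather than order-reversing, and so that $\hat0,\hat1$ match up as claimed. Everything else is the standard finite Alexandrov dictionary, and the finiteness of $X$ is what makes minimal open neighborhoods exist and removes any distinction between ``topology'' and ``Alexandrov topology''.
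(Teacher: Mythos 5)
Your overall approach is exactly the paper's (the finite Alexandrov dictionary), and Steps 1, 2 and 4 are fine. The genuine gap is in Step~3, where the proposed fix is simply wrong. You correctly observe that with the up-set convention the map $r\mapsto\ca{T}(r)$ is order-reversing, but you then claim that switching to down-sets repairs this, asserting that ``$r_1\subseteq r_2$ gives that every down-set of $r_1$ is a down-set of $r_2$.'' That implication is backwards. If $U$ is a down-set for $r_1$ (i.e.\ $x\in U$ and $y\,r_1\,x$ imply $y\in U$) and $x\in U$ with $y\,r_2\,x$, you cannot conclude $y\in U$, because $y\,r_2\,x$ does not give $y\,r_1\,x$. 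The true statement is the opposite: when $r_1\subseteq r_2$, every down-set of $r_2$ is a down-set of $r_1$, so $\ca{T}(r_2)\subseteq\ca{T}(r_1)$ and the map is \emph{still} order-reversing. This had to be so conceptually: down-sets of $r$ are the up-sets of $r^{\mathrm{op}}$, and $r\mapsto r^{\mathrm{op}}$ is an order-\emph{preserving} automorphism of $\PrePos_n$ (since $r_1\subseteq r_2$ iff $r_1^{\mathrm{op}}\subseteq r_2^{\mathrm{op}}$), so precomposing with it cannot change the variance.

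What the Alexandrov correspondence actually produces is a pair of mutually inverse order-reversing bijections, i.e.\ anti-isomorphisms $\PrePos_n\cong\Top_n^{\mathrm{op}}$ and $\Pos_n\cong\TopT0_n^{\mathrm{op}}$; concretely the trivial preorder $\hat 0\in\PrePos_n$ maps to the \emph{discrete} topology $\hat 1\in\Top_n$, and the full preorder to the indiscrete one. To be fair, the paper's own proof of this lemma constructs these same order-reversing maps while asserting without verification that they ``preserve the natural inclusion orders,'' and the surrounding prose (``inclusion of (pre)orders corresponds to increasing the strength of topology'') has the variance backwards as well. For the intended topological application this is harmless, since a finite poset and its opposite have the same order complex, hence $|S|\cong|S^{\mathrm{op}}|$, so Proposition~\ref{propTopologies} still follows. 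But you should not patch the variance with a false sub-claim: either record the anti-isomorphism honestly and invoke $|S|\cong|S^{\mathrm{op}}|$, or leave the statement as an isomorphism only if you supply an actual order-preserving bijection (which the down-set trick does not give).
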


\begin{proof}
A preorder $\leqslant$ on $X$ determines an Alexandrov topology $T_{\leqslant}$ whose open sets are all upper order ideals:
\[
U\in T_{\leqslant} \text{ if and only if } x\in U \text{ and } x\leqslant y \text{ imply } y\in U.
\]
On the other hand, given a topology $\ca{T}$ on a finite set $X$, a preorder $\leqslant_{\ca{T}}$ can be constructed as follows. For any $x\in X$, consider the least open neighborhood $U_x$ of $x$, which can be defined as the intersection of all neighborhoods of $x$. Define the preorder
\[
x\leqslant_{\ca{T}}y \text{ if and only if } U_x\supseteq U_y.
\]
It can be easily checked that two described constructions are inverses of one another, and they preserve the natural inclusion orders on $\PrePos_n$ and $\Top_n$.

For any $T_0$-topology $\ca{T}$ on $X$ and any two points $x\neq y$, either $U_x$ does not contain $y$ or $U_y$ does not contain $x$. Therefore, the relations $x\leqslant_{\ca{T}} y$ and $y\leqslant_{\ca{T}}x$ cannot hold simultaneously. This shows that  $\leqslant_{\ca{T}}$ is an order. Conversely, if $\leqslant$ is an order, then, given $x\leqslant y$ and $x\neq y$, we have that the neighborhood $U_y$ does not contain $x$. Hence $\ca{T}_{\leqslant}$ is a $T_0$-topology.
\end{proof}

In view of Lemma~\ref{lemTopPosets}, Proposition~\ref{propTopologies} is just a reformulation of Propositions~\ref{propBouc} and~\ref{propBoucTalk}.

\section{The Gale dual polytope of DAGs}\label{secGale}

\subsection{Gale duality}

In this section we recall the basic correspondence between convex polytopes and their affine Gale diagrams, and introduce a convex polytope of cycles $\Pc_n$ which is in certain sense dual to the space $\DAG_n$. Affine Gale duality is the classical subject in convex geometry. For a good exposition of this subject we refer to the book of Gr\"{u}nbaum~\cite{Gr} which contains all statements necessary for our context.

Let us call a finite multi-set $Z$ of points in $\Ss^{d-1}\sqcup\{0\}$ \emph{an extended spherical configuration}. Notice that any vector of $\Ro^d$ can be normalized to a point in $\Ss^{d-1}\sqcup\{0\}$. An extended spherical configuration $Z$ will be called \emph{doubly ample} if every open hemisphere of $\Ss^{d-1}$ contains at least $2$ points of $Z$, with multiplicities taken into account. This means that open hemispheres centered in non-zero points of $Z$ wrap the sphere at least twice.

With any $r$-dimensional convex polytope $P$ on $m$ vertices, one can associate its affine Gale diagram $G(P)$, which is a doubly ample extended spherical configuration of $m$ points in $\Ss^{m-r-2}\sqcup\{0\}$.

\begin{thm}[{Affine Gale duality, \cite[Sec.5.4]{Gr}}]\label{thmGaleMain}
There is a one-to-one correspondence between two classes of objects
\begin{itemize}
  \item $r$-dimensional convex polytopes with $m$ vertices in $\Ro^r$ up to affine transformations;
  \item doubly ample extended spherical configurations of $m$ points in $\Ss^{m-r-2}\sqcup\{0\}$ up to affine transformations and normalizations.
\end{itemize}
The correspondence from polytopes to spherical diagrams is given by affine Gale diagrams.
\end{thm}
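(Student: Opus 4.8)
The plan is to reprove this via the classical Gale transform, treating the correspondence as an involution on a piece of linear algebra. First I would describe the map from polytopes to diagrams. Given an $r$-dimensional polytope $P\subset\Ro^r$ with vertices $v_1,\dots,v_m$, homogenize to $\hat v_i=(v_i,1)\in\Ro^{r+1}$, and let $L\subseteq\Ro^m$ be the space of linear relations $\sum_i\lambda_i\hat v_i=0$; since $P$ is $r$-dimensional the $\hat v_i$ span $\Ro^{r+1}$, so $\dim L=m-r-1$. Choosing a linear isomorphism $L\cong\Ro^{m-r-1}$, let $\bar v_1,\dots,\bar v_m\in L^*\cong\Ro^{m-r-1}$ be the restrictions to $L$ of the coordinate functionals of $\Ro^m$; normalizing each $\bar v_i$ to a point of $\Ss^{m-r-2}\sqcup\{0\}$ produces the affine Gale diagram $G(P)$. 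Changing the isomorphism $L\cong\Ro^{m-r-1}$ composes $G(P)$ with an element of $GL_{m-r-1}(\Ro)$ followed by renormalization, which is exactly the equivalence quotiented out on the diagram side.

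Next I would isolate the pairing that drives everything: an element $a=(a_i)\in L$ --- equivalently an affine dependence $\sum_i a_iv_i=0$, $\sum_i a_i=0$, among the vertices --- acts on $L^*$ by $\langle a,\bar v_i\rangle=a_i$, so the open hemisphere $\{\langle a,\cdot\rangle>0\}$ of $\Ss^{m-r-2}$ contains $\bar v_i$ if and only if $a_i>0$. Thus ``$G(P)$ is doubly ample'' is literally ``every nonzero affine dependence among $v_1,\dots,v_m$ has at least two positive coefficients'', and I would verify the latter for vertex sets: an affine dependence with a unique positive coefficient $a_{i_0}$ exhibits $v_{i_0}$ as a convex combination of the remaining $v_j$, contradicting that $v_{i_0}$ is a vertex. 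Hence $G(P)$ is a doubly ample extended spherical configuration of $m$ points in $\Ss^{m-r-2}\sqcup\{0\}$.

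For the inverse and for surjectivity I would run the construction backwards. Given a doubly ample extended configuration $Z=\{z_1,\dots,z_m\}\subset\Ss^{m-r-2}\sqcup\{0\}\subset\Ro^{m-r-1}$, ampleness lets me rescale the nonzero points by positive scalars (a legitimate normalization, the zero points staying put) so that $\sum_i z_i=0$, and it forces $z_1,\dots,z_m$ to span $\Ro^{m-r-1}$. Put $K=\{(a_i)\in\Ro^m\mid\sum_i a_iz_i=0\}$; then $\dim K=r+1$ and $\mathbf 1=(1,\dots,1)\in K$ because $\sum_i z_i=0$. Restricting the coordinate functionals of $\Ro^m$ to $K$ yields $w_1,\dots,w_m\in K^*$, all lying on the affine hyperplane $\{\varphi\in K^*\mid\langle\varphi,\mathbf 1\rangle=1\}\cong\Ro^r$; their images $v_i$ affinely span this hyperplane, so $\dim\conv\{v_i\}=r$, and --- by the same pairing, now forbidding an open hemisphere that contains a single $z_i$ --- double ampleness forces every $v_i$ to be a distinct vertex of $P=\conv\{v_i\}$. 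Because each direction is the passage ``configuration $\mapsto$ coordinate functionals on the annihilator of its homogenization'' and $(L^*)^*=L$, the two maps are mutually inverse up to the $GL_{m-r-1}$-ambiguity on the diagram side and up to the stabilizer of $\mathbf 1$ in $GL_{r+1}$ --- that is, the affine group of $\Ro^r$ --- on the polytope side.

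I expect the main obstacle to be not any single step but the simultaneous bookkeeping of the two equivalence relations together with the exact match ``double ampleness $\Leftrightarrow$ all $m$ points are distinct, genuine vertices'' in both directions; once the pairing $\langle a,\bar v_i\rangle=a_i$ is set up, this match reduces to the elementary convexity argument above, but getting the ambiguity groups to correspond ($GL_{m-r-1}$ with renormalization on one side versus the affine group of $\Ro^r$ on the other) is where care is needed. For the underlying relative-interior dictionary of Gale duality I would follow the exposition in~\cite[Sec.5.4]{Gr}.
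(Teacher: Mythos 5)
The paper does not give a proof of this theorem at all — it is cited verbatim from Gr\"{u}nbaum~\cite[Sec.5.4]{Gr} — so there is nothing in the paper's own text to compare against. Your reconstruction is the standard linear-algebra proof of affine Gale duality (homogenize the vertices, pass to the space $L$ of linear dependences, read off the coordinate functionals on $L$, and invert by restricting coordinate functionals to the annihilator $K$ with $\mathbf 1\in K$), and the central dictionary you set up — the pairing $\langle a,\bar v_i\rangle=a_i$, so that ``doubly ample'' $\Leftrightarrow$ ``every nonzero affine dependence has at least two positive coefficients'' $\Leftrightarrow$ ``all $m$ points are distinct genuine vertices'' — is correct in both directions. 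This matches the argument in the cited reference; the only minor caveats are that you could spell out why a nonzero affine dependence cannot have zero positive coefficients (trivial, since $\sum a_i=0$), and that your rescaling of a doubly ample $Z$ to satisfy $\sum_i z_i=0$ uses the standard fact that a point in the relative interior of a convex hull of finitely many points is a strictly positive convex combination of them — both are small gaps that would be routine to fill.
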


The combinatorial structure of a polytope $P$ can be read of its Gale diagram as follows.

\begin{prop}[{\cite[Sec.5.4(1)]{Gr}}]\label{propGaleMain}
Vertices $i_1,\ldots,i_k$ belong to one proper face of $P$ if and only if the points of $G(P)$ corresponding to the complement $[m]\setminus\{i_1,\ldots,i_k\}$ contain $0$ in their convex hull.
\end{prop}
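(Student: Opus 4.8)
The plan is to reduce the statement to the standard linear-algebraic description of the Gale transform, recorded in~\cite[Sec.5.4]{Gr}. Recall how $G(P)$ is built. Write $P=\conv\{v_1,\dots,v_m\}\subset\Ro^r$ with the $v_i$ affinely spanning $\Ro^r$, and form the $(r+1)\times m$ matrix $A$ whose columns are the homogenized vertices $\hat v_i=(v_i,1)\in\Ro^{r+1}$. This matrix has rank $r+1$, so $L=\ker A=\{x\in\Ro^m\mid\sum_i x_i\hat v_i=0\}$ has dimension $m-r-1$; choosing a basis of $L$ and reading off coordinates produces the Gale transform vectors $\bar v_1,\dots,\bar v_m\in\Ro^{m-r-1}$, and $G(P)$ is obtained by normalizing each $\bar v_i$ to a point of $\Ss^{m-r-2}\sqcup\{0\}$. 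Different choices of basis of $L$ change the $\bar v_i$ by a common linear automorphism of $\Ro^{m-r-1}$, which does not affect the property ``$0\in\conv$'', so we may work with any fixed choice.

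First I would record the basic duality between the two families: for a vector $x\in\Ro^m$ one has $x$ in the row space of $A$ if and only if $\sum_{i=1}^m x_i\bar v_i=0$. Indeed the row space of $A$ is the orthogonal complement of $L=\ker A$, and under the identification of $L$ with $\Ro^{m-r-1}$ used to define the $\bar v_i$, orthogonality of $x$ to all of $L$ translates precisely into $\sum_i x_i\bar v_i=0$. Moreover the row space of $A$ is exactly the set of vectors of the form $\left(\langle w,\hat v_i\rangle\right)_{i\in[m]}$ for $w\in\Ro^{r+1}$.

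Next I would translate the face condition. A collection $\{v_i\mid i\in S\}$ lies in a common proper face of $P$ exactly when there is a nontrivial supporting hyperplane passing through all of them; in the homogenized picture this means there is $w\in\Ro^{r+1}$ with $\langle w,\hat v_i\rangle\geqslant 0$ for all $i$, with $\langle w,\hat v_i\rangle=0$ for $i\in S$, and with $\langle w,\hat v_{i_0}\rangle>0$ for at least one index $i_0$ (necessarily $i_0\notin S$; this last requirement is precisely what makes the face proper). Setting $x=\left(\langle w,\hat v_i\rangle\right)_i$, this says the row space of $A$ contains a vector $x\geqslant 0$, $x\neq 0$, supported on $[m]\setminus S$. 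By the duality of the previous step this is equivalent to the existence of reals $x_i\geqslant 0$, not all zero, with $\sum_{i\in[m]\setminus S}x_i\bar v_i=0$; dividing by $\sum_i x_i>0$ this is in turn equivalent to $0\in\conv\{\bar v_i\mid i\in[m]\setminus S\}$.

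Finally I would observe that the last condition is unchanged when each $\bar v_i$ is replaced by its normalization onto $\Ss^{m-r-2}\sqcup\{0\}$: a relation $\sum\lambda_i\bar v_i=0$ with $\lambda_i\geqslant 0$, $\sum\lambda_i=1$ rescales, coordinate by coordinate, into an analogous relation among the normalized points and conversely (a zero vector normalizes to $0$, for which membership of $0$ in the hull is automatic). Hence the condition holds for the points of $G(P)$ themselves, which is exactly the assertion of the proposition. I expect the only genuinely delicate point to be the bookkeeping around the word \emph{proper} --- ensuring the supporting functional is not identically zero on the homogenized vertex set, equivalently that $S\neq[m]$ --- and the attendant degenerate cases ($S=\varnothing$, or some $\bar v_i=0$), which are handled by the double ampleness of $G(P)$; everything else is the routine dualization above.
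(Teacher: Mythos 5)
The paper offers no proof of its own here: the proposition is quoted from Gr\"{u}nbaum (Sec.\,5.4(1)), so there is no internal argument to compare against. Your derivation is correct and is exactly the standard linear-algebra route one finds in Gr\"{u}nbaum and in Ziegler's treatment of the Gale transform: homogenize the vertices, identify the row space of $A$ with the annihilator of the Gale vectors, translate ``$S$ lies on a common proper face'' into ``the row space of $A$ contains a nonnegative, nonzero vector vanishing on $S$,'' and dualize. Two small phrasing points, neither a gap: the phrase ``supported on $[m]\setminus S$'' should be read as ``support contained in $[m]\setminus S$,'' which is what your argument actually uses; and your treatment of the normalization onto $\Ss^{m-r-2}\sqcup\{0\}$ is correct, including the degenerate case $\bar v_i=0$, where $0$ is trivially in the relevant hull.
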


\begin{cor}
The set $I=\{i_1,\ldots,i_k\}$ is the vertex set of some facet of $P$ if and only if the complement $[m]\setminus I$ is a minimal non-simplex of $\Stel(G(P))$.
\end{cor}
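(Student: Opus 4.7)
The plan is to derive the corollary by combining Proposition~\ref{propGaleMain} with the complement map $I \mapsto [m]\setminus I$ and tracking extremal elements on both sides. Observe first that, by the very definition of the constellation complex, a set $J \subseteq [m]$ is a simplex of $\Stel(G(P))$ precisely when the points $\{z_j : j \in J\}$ of the Gale diagram lie in a common open hemisphere of $\Ss^{m-r-2}$, which in turn is equivalent to $0 \notin \conv\{z_j : j \in J\}$ (by the standard separation argument; the point $0$, if present in $G(P)$, is never a vertex of $\Stel(G(P))$, and so causes no issue). Thus Proposition~\ref{propGaleMain} can be rephrased as: a set $I \subseteq [m]$ is contained in the vertex set of some proper face of $P$ if and only if its complement $[m]\setminus I$ is \emph{not} a simplex of $\Stel(G(P))$.

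Let $\F = \{ I \subseteq [m] \mid I \subseteq V(F) \text{ for some proper face } F \text{ of } P \}$. This collection is downward-closed, hence a simplicial complex on $[m]$. The rephrased proposition says exactly that the complement map restricts to an inclusion-reversing bijection between $\F$ and the set of non-simplices of $\Stel(G(P))$; consequently, it carries the maximal elements of $\F$ to the minimal non-simplices of $\Stel(G(P))$. It therefore suffices to identify the maximal elements of $\F$ with the vertex sets of facets of $P$.

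For this identification one invokes two standard properties of convex polytopes: a face is the convex hull of its vertex set, and the intersection of two faces is again a face. If $F$ is a facet with $V(F)=I$, and $I \subseteq I' \subseteq V(F')$ for some proper face $F'$, then $F = \conv V(F) \subseteq \conv V(F') = F'$, so the maximality of the facet $F$ forces $F' = F$ and $I' = I$; hence $I$ is maximal in $\F$. Conversely, if $I$ is maximal in $\F$, one takes the smallest proper face $F$ containing $I$ (well-defined by the intersection property), observes that $V(F) \in \F$ with $V(F) \supseteq I$, concludes $V(F) = I$ by maximality, and rules out any strictly larger proper face by the same argument. The main point to watch is that Proposition~\ref{propGaleMain} asserts only \emph{containment} of $I$ in the vertex set of a face rather than \emph{equality}; this is precisely why $\F$, and not merely the collection of vertex sets of faces, is the correct intermediate object, and once this is set up the corollary is a direct consequence of complementation.
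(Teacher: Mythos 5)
Your proof is correct and follows the route the paper intends: the corollary is the direct consequence of Proposition~\ref{propGaleMain}, obtained by translating ``$0$ in the convex hull of the complementary points'' into ``the complement is a non-simplex of $\Stel(G(P))$'' and then passing to extremal elements under the inclusion-reversing complement map. Your extra care in identifying maximal elements of the complex of vertex sets lying in proper faces with vertex sets of facets is exactly the standard polytope argument left implicit in the paper.
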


\subsection{Combinatorial Alexander duality}

Proposition~\ref{propGaleMain} can be conceptualized in two natural steps.

It is not difficult to see that whenever one knows which collections of vertices form facets of a polytope, the whole lattice of faces of $P$ can be reconstructed by taking intersections of facets. The first author developed this construction in the theory of nerve-complexes~\cite{AB}.

\begin{con}\label{conNerveCpx}
Let $P$ be a convex polytope of dimension $d$ with the vertex set $v_1,\ldots,v_m$. We define a simplicial complex $K_{P^*}$ on the set $[m]=\{1,\ldots,m\}$ by the condition that $\{i_1,\ldots,i_s\}\in K_{P^*}$ if and only if the vertices $v_{i_1},\ldots,v_{i_s}$ lie in a single facet. The complex $K_{P^*}$ is called \emph{the nerve-complex} of the polar dual polytope $P^*$. It can be easily seen that $K_{P^*}$ is always homotopy equivalent to $S^{d-1}$. If $P$ is simplicial, then $K_{P^*}$ is combinatorially equivalent to its boundary $\dd P$.
\end{con}

Recall the classical notion of the combinatorial Alexander duality.

\begin{con}\label{conAlexDual}
Let $K$ be a simplicial complex on the set $[m]$, which is neither the whole simplex $\Delta_{[m]}$ nor its boundary. \emph{The combinatorial Alexander dual complex} is defined by
\[
\hat{K}=\{I\subset[m]\mid [m]\setminus I\notin K\}.
\]
The principal result about this notion states that barycentric subdivisions of $K$ and $\hat{K}$ can be embedded in $\dd\Delta_{[m]}'\cong S^{n-2}$ as Alexander dual subsets. This implies the (ordinary) Alexander duality
\[
\Hr_{j}(\hat{K};R)\cong \Hr^{m-3-j}(K;R).
\]
In particular, if homology of $K$ are concentrated in degree $r$, then cohomology of $\hat{K}$ are concentrated in degree $m-3-r$, and the complexes have the same top Betti number.
\end{con}

In terms of Constructions~\ref{conNerveCpx} and~\ref{conAlexDual}, Proposition~\ref{propGaleMain} can be reformulated as follows.

\begin{cor}
Let $P$ be a convex polytope. Then the constellation complex of the Gale dual configuration $G(P)$ coincides with the combinatorial Alexander dual complex to the nerve complex $K_{P^*}$:
\[
\Stel(G(P))=\widehat{K_{P^*}}.
\]
\end{cor}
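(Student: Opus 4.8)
The statement to prove is the final Corollary: for a convex polytope $P$,
\[
\Stel(G(P))=\widehat{K_{P^*}}.
\]

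The plan is to verify the equality of two simplicial complexes on the common vertex set $[m]$ (the vertices of $P$) by chasing the definitions, using the two combinatorial facts recalled just above, namely Proposition~\ref{propGaleMain} (the Gale-diagram reading of faces) and Construction~\ref{conAlexDual} (the formula $\widehat{K}=\{I\mid [m]\setminus I\notin K\}$). First I would fix notation: write $G=G(P)$ for the affine Gale diagram, an extended spherical configuration in $\Ss^{m-d-2}\sqcup\{0\}$, and recall that $I\in\Stel(G)$ means the points $\{g_i\mid i\in I\}$ lie in a common open hemisphere, equivalently the cone $C_I=\Cone\{g_i\mid i\in I\}$ is strictly convex, equivalently $0$ is \emph{not} in the convex hull of $\{g_i\mid i\in I\}$ (here one must be a little careful about the extra point $0$: if some $g_i=0$ then that vertex is never in any open hemisphere, but it is also the case that $\{0\}$ already "contains $0$ in its convex hull", so the bookkeeping is consistent).

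Next I would translate $\widehat{K_{P^*}}$. By Construction~\ref{conAlexDual}, $I\in\widehat{K_{P^*}}$ if and only if $[m]\setminus I\notin K_{P^*}$; and by Construction~\ref{conNerveCpx}, $[m]\setminus I\in K_{P^*}$ means the vertices indexed by $[m]\setminus I$ all lie in a single facet of $P$, hence (passing to the face lattice, since every face is an intersection of facets) lie in a single proper face of $P$. So $I\in\widehat{K_{P^*}}$ iff the vertices indexed by $[m]\setminus I$ do \emph{not} lie in a common proper face of $P$. Now apply Proposition~\ref{propGaleMain} with the roles of $I$ and its complement swapped: the vertices indexed by a set $J$ lie in a common proper face of $P$ iff the Gale points indexed by $[m]\setminus J$ contain $0$ in their convex hull. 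Taking $J=[m]\setminus I$ gives: the vertices indexed by $[m]\setminus I$ lie in a common proper face iff the Gale points indexed by $I$ contain $0$ in their convex hull. Negating both sides, $I\in\widehat{K_{P^*}}$ iff $0\notin\conv\{g_i\mid i\in I\}$ iff $I\in\Stel(G)$, which is exactly the claimed equality.

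The only genuine subtlety — and the step I would treat most carefully — is matching the two statements at the boundary of definitions: (i) the equivalence "lies in a facet" $\Leftrightarrow$ "lies in a proper face", which uses that the face lattice of $P$ is generated by facets under intersection (true for any polytope, see the remark preceding Construction~\ref{conNerveCpx}); (ii) the hypothesis in Construction~\ref{conAlexDual} that $K_{P^*}$ is neither $\Delta_{[m]}$ nor $\dd\Delta_{[m]}$, which one should note holds as soon as $\dim P\geqslant 1$ and $P$ has at least one non-face, i.e. in all cases of interest (a degenerate case, e.g. $P$ a simplex, would make $\Stel(G(P))$ a boundary sphere and the statement still reads correctly once one allows $\widehat{\dd\Delta}$ by the same formula); and (iii) the convention that an empty intersection/convex hull and the point $0$ interact correctly with "$0$ in the convex hull", so that $\Stel(G)$ — which does not record the empty simplex — matches $\widehat{K_{P^*}}$, which also does not. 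Once these conventions are pinned down, the proof is the two-line negation chase above.

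\begin{proof}
Both $\Stel(G(P))$ and $\widehat{K_{P^*}}$ are simplicial complexes on the vertex set $[m]$ of $P$; we show that a subset $I\subseteq[m]$ belongs to one if and only if it belongs to the other. Write $G(P)=\{g_1,\ldots,g_m\}$. By the discussion preceding Construction~\ref{conEdgeOfCone}, $I\in\Stel(G(P))$ if and only if the cone $C_I=\Cone\{g_i\mid i\in I\}$ is strictly convex, equivalently if and only if $0\notin\conv\{g_i\mid i\in I\}$.

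On the other hand, by Construction~\ref{conAlexDual} we have $I\in\widehat{K_{P^*}}$ if and only if $[m]\setminus I\notin K_{P^*}$. By Construction~\ref{conNerveCpx}, $[m]\setminus I\in K_{P^*}$ means that the vertices $\{v_j\mid j\in[m]\setminus I\}$ all lie in a single facet of $P$; since the face lattice of $P$ is generated by its facets under intersection, this is equivalent to saying that these vertices lie in a single proper face of $P$. Hence $I\in\widehat{K_{P^*}}$ if and only if $\{v_j\mid j\in[m]\setminus I\}$ do not lie in a common proper face of $P$.

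Now apply Proposition~\ref{propGaleMain} to the set $[m]\setminus I$: the vertices indexed by $[m]\setminus I$ lie in a single proper face of $P$ if and only if the points of $G(P)$ indexed by the complement $[m]\setminus([m]\setminus I)=I$ contain $0$ in their convex hull, i.e. if and only if $0\in\conv\{g_i\mid i\in I\}$. Negating, $I\in\widehat{K_{P^*}}$ if and only if $0\notin\conv\{g_i\mid i\in I\}$, which by the first paragraph is precisely the condition $I\in\Stel(G(P))$. Therefore $\Stel(G(P))=\widehat{K_{P^*}}$.
\end{proof}
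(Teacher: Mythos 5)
Your proof is correct and matches the paper's (implicit) approach: the paper presents this corollary as a direct reformulation of Proposition~\ref{propGaleMain} via the definitions in Constructions~\ref{conNerveCpx} and~\ref{conAlexDual}, and your argument is exactly the expected definition chase, including the correct observation that ``lies in a common facet'' and ``lies in a common proper face'' are equivalent (though for that you only need that facets are the maximal proper faces, not the intersection-generation property). The bookkeeping remarks about the extended configuration and the point $0$ are a welcome addition, since the paper glosses over them.
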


\begin{rem}\label{remAmpleToPoly}
It follows from Theorem~\ref{thmGaleMain} that, for any doubly ample spherical configuration $Z$, there exists a polytope $P$ such that $\Stel(Z)$ is combinatorial Alexander dual to $K_{P^*}$.
\end{rem}

\subsection{Polytope of cycles}\label{subsecPolyCycles}

The application of Remark~\ref{remAmpleToPoly} to the root system of type $A$ seems a natural thing one can do.

\begin{con}\label{conCyclePolytope}
Notice that the type A spherical configuration $A_{n-1}\subset\Ss^{n-2}$ is doubly ample for $n\geqslant 3$. Indeed, if $x=(x_1,\ldots,x_n)\in\Pi=\{\sum x_i=0\}$ is a nonzero vector, then not all of $x_i$'s are equal. Therefore, if $n\geqslant 3$ there are at least two strict inequalities of the form $x_i>x_j$ on the coordinates of this vector, which means that the point $x$ lies in at least two open hemispheres centered at $\alpha_{ij}$.

According to Remark~\ref{remAmpleToPoly}, there exists a convex polytope $\Pc_n$ of dimension $n(n-1)-(n-2)-2=n(n-2)$ with $n(n-1)$ many vertices which is Gale dual to the configuration $A_{n-1}$. In particular, we have $\widehat{K}_{\Pc_n^*}=\Stel(A_{n-1})$. The facets of $\Pc_n$ are the complements to minimal non-simplices of $\Stel(A_{n-1})=\DAG_n$. Non-simplices of $\DAG_n$ are the directed graphs which have oriented cycles. Therefore minimal non-simplices are oriented cycles themselves. The vertex set of any facet of $\Pc_n$ is therefore the complement to some oriented cycle.
\end{con}

\begin{ex}
For $n=3$, the Gale dual $\Pc_3$ polytope to the configuration $A_2$ is a 3-dimensional triangular prism shown on Fig.~\ref{figCDGpolytope}. It has $6$ vertices labelled by ordered pairs $(i,j)$, $i\neq j$ (we denote the vertices by $\beta_{ij}$ to distinguish them from the elements $\alpha_{ij}$ of $A_2$). The facets correspond to complements of directed cycles. There exist $5$ oriented cycles on $3$ vertices. Three cycles of length $2$ have the form $1\to 2\to 1$, $1\to 3\to 1$, $2\to 3\to 2$: their complements form quadrangular side faces of the prism. The two cycles of length $3$, $1\to 2\to 3\to 1$ and $3\to 2\to 1\to 3$, are complementary to each other, they form the prism bases.
\end{ex}

\begin{figure}[h]
\begin{center}
\includegraphics[scale=0.5]{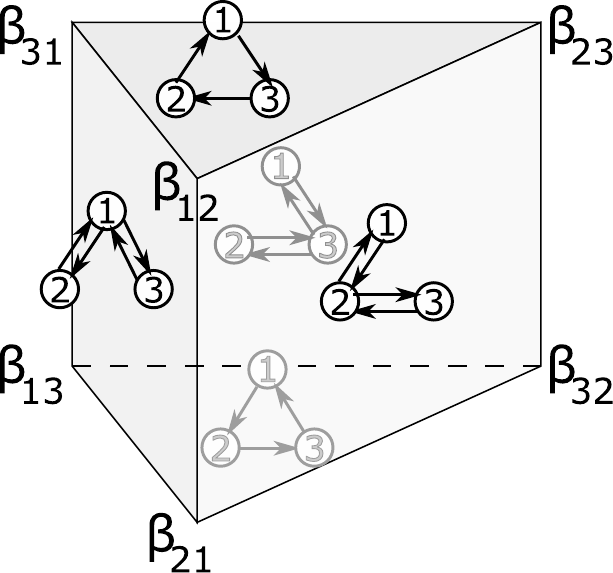}
\end{center}
\caption{The polytope $\Pc_3$ Gale dual to the configuration $A_2$.}\label{figCDGpolytope}
\end{figure}

The next statement easily follows from the properties of Gale duality and the definition of the polytope $\Pc_n$.

\begin{prop}\label{propFacesCycles}
Consider the set $[n]^{(2)}$ of all ordered pairs $(i,j)$, $i,j\in[n]$, $i\neq j$, and let $\Bb([n]^{(2)})$ be the boolean lattice on this set. Consider the subset $\Cy\subset\Bb([n]^{(2)})$ of all cycles on $n$ vertices. Then the (semi)lattice $\Jc_n$ formed by the unions of elements of $\Cy$ in $\Bb([n]^{(2)})$ is isomorphic to the lattice of faces of a convex $n(n-2)$-dimensional polytope. In particular, the geometrical realization $|\Jc_n\setminus\{\hat{0},\hat{1}\}|$ is homeomorphic to $(n^2-2n-1)$-dimensional sphere.
\end{prop}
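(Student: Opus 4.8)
The plan is to identify $\Jc_n$ directly with the face lattice of the polar dual polytope $\Pc_n^*$, and then to invoke the standard fact that the order complex of the proper part of the face lattice of a $d$-dimensional polytope is a triangulation of $S^{d-1}$. Throughout I assume $n\geqslant 3$, which (as noted in Construction~\ref{conCyclePolytope}) is exactly what guarantees that $A_{n-1}$ is doubly ample and hence that the Gale dual polytope $\Pc_n$ exists; the case $n=2$ is degenerate and can be checked by hand ($\Jc_2=\{\hat 0,\hat 1\}$ is the face lattice of a point).

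First I would recall from Construction~\ref{conCyclePolytope} that $\Pc_n$ is a convex polytope of dimension $n(n-2)$ whose vertices are indexed by $[n]^{(2)}$, and whose facets are precisely the complements $[n]^{(2)}\setminus Z$ of the directed cycles $Z\in\Cy$. Since every nonempty face of a polytope is the intersection of the facets containing it, and since intersecting the facets $[n]^{(2)}\setminus Z_1,\dots,[n]^{(2)}\setminus Z_k$ of $\Pc_n$ yields the face with vertex set $[n]^{(2)}\setminus(Z_1\cup\dots\cup Z_k)$, the vertex sets of the proper nonempty faces of $\Pc_n$ are exactly the sets $[n]^{(2)}\setminus U$ with $U$ a nonempty union of cycles satisfying $U\subsetneq[n]^{(2)}$. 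I would then note that $[n]^{(2)}$ itself lies in $\Jc_n$, being the union of the length-$2$ cycles $\{(i,j),(j,i)\}$ over all unordered pairs, and that the empty union is $\hat 0\in\Jc_n$. Putting this together, the complementation map $U\mapsto[n]^{(2)}\setminus U$ is an inclusion-reversing bijection from $\Jc_n$ onto the face lattice $\ca{L}(\Pc_n)$: it sends $\hat 0=\varnothing$ to the full vertex set, i.e. to $\Pc_n=\hat 1$; it sends $\hat 1=[n]^{(2)}$ to the empty face $\hat 0$; and it restricts to a bijection between proper nonempty unions of cycles and proper nonempty faces. Hence $\Jc_n\cong\ca{L}(\Pc_n)^{\mathrm{op}}$.

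Since the face lattice of any polytope is anti-isomorphic to the face lattice of its polar dual, and $\dim\Pc_n^*=\dim\Pc_n=n(n-2)$, this gives $\Jc_n\cong\ca{L}(\Pc_n^*)$, which is the first assertion. For the second, recall that for any $d$-dimensional polytope $Q$ the order complex of $\ca{L}(Q)\setminus\{\hat 0,\hat 1\}$ is the barycentric subdivision of the boundary complex of $Q$, so that $|\ca{L}(Q)\setminus\{\hat 0,\hat 1\}|\cong\partial Q\cong S^{d-1}$; applying this with $Q=\Pc_n^*$ and $d=n(n-2)$ yields $|\Jc_n\setminus\{\hat 0,\hat 1\}|\cong S^{n(n-2)-1}=S^{n^2-2n-1}$.

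As the argument is essentially bookkeeping on top of Construction~\ref{conCyclePolytope}, the only point that needs care is the boundary behaviour of the complementation map: one must verify that $[n]^{(2)}$ genuinely is a union of cycles (so that $\hat 1\in\Jc_n$ matches the empty face of $\Pc_n$), that the unions of cycles equal to all of $[n]^{(2)}$ correspond to that empty face, and that "intersection of the facets containing it" really recovers \emph{every} face of $\Pc_n$, so that the image of complementation is all of $\ca{L}(\Pc_n)$ rather than a proper subposet. No individual step is hard.
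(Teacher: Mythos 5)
Your proof is correct and takes essentially the same route as the paper: pass to complements, use that the facets of the Gale-dual polytope $\Pc_n$ are precisely the complements of directed cycles (Construction~\ref{conCyclePolytope}), recover the whole face lattice as intersections of facets, and reverse the order via polar duality. The only difference is that you spell out the bookkeeping around $\hat 0$, $\hat 1$, and surjectivity more carefully than the paper's terser write-up.
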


\begin{proof}
Instead of a set $A\subset [n]^{(2)}$, take its complement. Then we can identify $\Jc_n$ with the (semi)lattice of $\Bb([n]^{(2)})$ formed by intersections of the complements to cycles, up to order reversal. However, all possible intersections of complements to cycles correspond to proper faces of the polytope $\Pc_n$ according to Construction~\ref{conCyclePolytope}. Remembering the order reversal, we see that $\Jc_n$ is isomorphic to the face poset of the polar dual polytope $\Pc_n^*$ of dimension $n(n-2)$.
\end{proof}

\begin{rem}
Although the construction of Gale duality is explicit and constructible for each particular $n$, we are unaware of any uniform description of a convex realization of either $\Pc_n$ or its polar dual.
\end{rem}

\section{Proposed applications and remarks}\label{secSpherDAGs}

\subsection{Nerves beyond nerve theorem}\label{subsecNerves}
In this subsection we remind the reader the basic setting of topological data analysis and explain the relation of some constructions to the current work.

\begin{defin}
A simplicial filtration on a finite set $[m]$ is a collection $\F=\{K_t\mid t\in\Rg\}$ such that
\[
K_{t_1}\subseteq K_{t_2}\text{ for }t_1<t_2.
\]
\end{defin}

It is usually assumed that $K_{+\infty}=\bigcup_tK_t$ coincides with the full simplex on the vertex set $[n]$. Every subset $I\subset [n]$ appears in a filtration $\F$ at the moment $\inf\{t\mid I\in K_t\}$. Therefore, a simplicial filtration can be alternatively encoded by a discrete function $\birth\colon 2^{[n]}\to\Rg$ satisfying the property
\[
I\subset J \text{ implies }\birth(I)\leqslant \birth(J).
\]
This function encodes the birth times of the simplices.

Persistent homology is an algebraical invariant which allows to describe the temporal dynamics of simplicial homology of a filtration. We refer to ?? for the details of this construction.

\begin{con}\label{conCzechVR}
Given a metric space $(M,d)$ together with a finite point cloud $X=\{x_1,\ldots,x_m\}\subset M$ one can define two common types of simplicial filtrations:
\begin{itemize}
  \item Vietoris--Rips filtration $\F_{VR}(M,d,X)=\{K_t^{VR}\}$. A simplex $I\subset [m]$ of this filtration is born at the time moment
  \[
  \frac12\min\{d(x_i,x_j)\mid \{i,j\}\in I, i\neq j\}.
  \]
  In other words, a simplex $I$ lives at the time moment $t\in\Rg$ if and only if all pairwise distances between its vertices exceed $t$. It can be seen that the whole ambient metric space $M$ is not needed to construct Vietoris--Rips filtration: $\F_{VR}(M,d,X)=\F_{VR}(X,d,X)$.
  \item \v{C}ech filtration $\F_{C}(M,d,X)=\{K_t^{C}\}$. A simplex $I\subset [m]$ of this filtration is born at the time moment
  \[
  \inf\left\{t\in\Rg\mid \bigcap\nolimits_{i\in I}\bar{B}_{t}(x_i)\neq\varnothing\right\},
  \]
  where $\bar{B}_{r}(x)$ denotes the closed ball of radius $r$ centered in $x$, in the metric space $M$. This definition essentially require the whole metric space $M$.
\end{itemize}
\end{con}

The underlying idea of topological data analysis consists in the belief that persistent homology of either $\F_{VR}(M,d,X)$ or $\F_{C}(M,d,X)$ correlate with the actual homology of the metric space $M$, for sufficiently dense data clouds $X$. This belief has the following grounds:

\begin{enumerate}
  \item The term $K_t$ of the \v{C}ech filtration coincides with the nerve of the covering $\bigcup_{i\in[m]}\bar{B}_{t}(x_i)$. If $t$ is sufficiently large, then the union of balls covers the whole space $M$, and, provided that the intersections of the covering are contractible, we are in position to apply the nerve theorem and get homotopy equivalence $K_t\simeq M$.
  \item Vietoris--Rips and \v{C}ech filtrations are interconnected $K_t^{C}\subseteq K_t^{VR}\subseteq K_{2t}^C$, so, in certain sense, topological contents of these two filtrations are ``statistically'' the same.
  \item There exist a bunch of persistence theorems which relate persistent homology of various data clouds $X$ sampled from the same space $M$.
\end{enumerate}

See???

All items stated above should be approached with certain criticism. First of all, we give a somehow elementary example, which demonstrates that Vietoris--Rips filtration may produce counter-intuitive results.

\begin{ex}\label{exCube}
Consider the set of vertices of the $n$-dimensional cube in $\Ro^n$:
\[
X=\{(\varepsilon_1,\ldots,\varepsilon_n)\mid \varepsilon_i=\pm1\},
\]
with the standard Euclidean metric. Then, at the time interval $E=[\sqrt{n-1},\sqrt{n})$ the Vietoris--Rips filtration has a persistent homology of degree $2^{n-1}-1$. Indeed, when $t\in E$, the complex $K_t^{VR}(X)$ is combinatorially equivalent to the simplicial complex
\[
\{i_1,i'_1\}\ast \{i_1,i'_1\}\ast\cdots\ast \{i_{2^{n-1}},i'_{2^{n-1}}\}\cong S^{2^{n-1}-1},
\]
where the pairs $i_j,i'_j$ correspond to the endpoints of main diagonals of the cube.
\end{ex}

Intuitively, we expect that a cube in a Euclidean space should have some simple homology, however, this simple calculation shows this is not the case for Vietoris--Rips filtration. Certainly, this example does not contradict to the known results, since the lifespan of this huge-degree homology equals $\sqrt{n}-\sqrt{n-1}$, which is small for large $n$. However, the example shows that degrees of persistent homology can be very large being compared to the dimension of the ambient space.

One can see that persistent homology of mathematically structured data is sensitive to combinatorics not just topology. Moreover, this combinatorial information about the data set inhabits large homological degrees. We believe that this phenomenon should be investigated further in more detail.

Our next example shows the situation, when computation of persistent homology produces counterintuitive answers, even for the \v{C}ech filtration.

\begin{ex}\label{exSphere}
Consider the metric space $\Ss^{d-1}$, the round sphere of unit radius with the geodesic metric $\delta$. Let $Z=\{z_{\pm1},\ldots,z_{\pm m}\}\subset\Ss^{d-1}$ be an ample antipodal spherical configuration as defined in Section~\ref{secIntro}. Consider the \v{C}ech filtration $\F_{C}(\Ss^{d-1},\delta,Z)=\{K_t^{C}\}$. Since the open hemispheres $D(z_i)$ cover the whole sphere, the filtration terms $K_{t}^{C}$ are homotopy equivalent to $S^{d-1}$ for $t=\pi/2-\varepsilon$ and sufficiently small $\varepsilon>0$. Indeed, in this range the nerve theorem is applicable.

However, at the time moment $t_1=\pi/2$, the simplicial complex $K_{t_1}^{C}=\BStel(Z)$ becomes homotopy equivalent to the wedge $\bigvee_{\mu(\Ha)}S^{2d-2}$ according to Theorem~\ref{thmBigConstelSphere}. This means that $\mu(\Ha)$ persistent homology of degree $2d-2$ are born at the time moment $t_1$. 

We don't have an estimation of the life durations of these persistent homology, however, their number $\mu(\Ha)$ may be very large. Indeed, even in the case of $Z=A_d$, the number of persistent homology of degree $2d-2$ is equal $d!$ which is very large compared both to the dimension $d-1$ of the metric space and the number $d(d+1)$ of points in the point cloud.
\end{ex}

Again, Example~\ref{exSphere} does not contain any actual contradiction with established mathematical results, since the balls in general metric spaces are not expected to satisfy the assumption of the nerve theorem. However, this example shows that the work with persistent homology, even on smooth manifolds, should be made with certain care. Another example of this sort appeared in our work~??, where we computed 3-dimensional persistent homology of a point cloud sampled, in a regular fashion, from a thickened torus $T^2\times D^1$, moreover, the lifetime of this parasite homology was equal to the lifetimes of the meaningful features.

At time scales which are large enough compared to the inner geometrical features of the manifold, some high degree homological features may appear which do not highlight any topological properties. Ultimately, this observation is an evidence against usage of persistent homology in machine learning, where common topics, such as manifold conjecture, usually deal with extremely high dimensions.


\subsection{Spherical representation of DAGs}\label{subsecSpherRepresent}
The idea used in the proof of Proposition~\ref{propBjWel} can be adopted for the spherical encoding of DAGs.

\begin{con}\label{conFromDAGtoVector}
Assume we are given a DAG with nonnegative weights attached to the edges. This means we are given a triple $\Gamma=([n],E,w)$, where the edge-set $E\in\DAG_n=\Stel(A_{n-1})$ and $w=(w_{ij}\mid (i,j)\in E)$ is the list of nonnegative real weights attached to the edges of $\Gamma$. We assume that a zero weight of an edge corresponds to the situation when the edge is absent from a graph. Let $\uDAG_n$ denote the space of all weighted directed graphs whose weights sum to $1$. Then $\uDAG_n$ is naturally homeomorphic to the geometrical realization of the complex $\DAG_n$, and the space of all weighted directed graphs is an infinite cone over $\uDAG_n$.

By the definition of $\uDAG_n$, we have $\|w\|=1$, so $w_{ij}$ do not vanish simultaneously. Then the weighted DAG $\Gamma$ can be represented on a sphere by the normalization $x_\Gamma/\|x_\Gamma\|$ of the vector
\[
x_\Gamma=\sum\nolimits_{(i,j)\in E} w_{ij}\alpha_{ij}\in\Pi\subset\Ro^n
\]
(this vector is nonzero since the vectors $\{\alpha_{ij}\mid (i,j)\in E\}$ lie in an open halfspace).
\end{con}

This correspondence is not one-to-one: many different weighted graphs represent one point on a sphere (this happens because $\uDAG_n$ is not homeomorphic to $S^{n-2}$, just homotopy equivalent). The ways of choosing a unique weighted DAG for a point on a sphere depend on the particular task under consideration. One of the ways to restore a graph from a vector on a sphere is the following.

\begin{con}\label{conFromVectorToDAG}
Take a vector $x=(x_1,\ldots,x_n)\in\Ss^{n-2}\subset\Pi$. Since $x\neq0$ and $\sum x_i=0$ there exist at least one ordered pair $(i,j)$ such that $x_i<x_j$. The graph can be constructed as follows. For any pair $(i,j)$ such that $x_i<x_j$ add an edge $(i,j)$ to a graph and enhance it with the weight $x_j-x_i>0$.

A variation of this construction starts with arbitrary differentiable function $\rho\colon \Rg\to \Rg$ such that $\rho(0)=0$. Then, whenever $x_i<x_j$ we add the edge $(i,j)$ to a graph weighted with $\rho(x_j-x_i)$.
\end{con}

\begin{rem}
Be aware that Construction~\ref{conFromVectorToDAG} is neither left nor right inverse to the Construction~\ref{conFromDAGtoVector}.
\end{rem}

\begin{rem}
We propose the following application of the space $\DAG_n$. Many classical problems of machine learning are reformulated as the problem of minimization of some smooth (or piecewise smooth) function $L\colon X\to\Ro$, called the loss function. The domain $X$ is usually a Euclidean space $\Ro^w$ of network weights, and the classical (or stochastic) gradient descent algorithm works well for such problem. In the area of the automated machine learning (AutoML) one is able to vary the architecture of the network, not just its weights, in order to find the optimal network shape for the given set of tasks. The notion of the architecture is rather fuzzy. However, in many cases the neural network architecture is represented by a DAG with some additional coloring --- the names of operations at the vertices. Therefore, the optimization procedure on the space of all neural architectures seems related to the problem of mathematical description of this space itself.

The homotopy type of the space $\DAG_n$ is already described in the work~\cite{BjWel} in a quite constructive way (see Construction~\ref{conFromDAGtoVector} and~\ref{conFromVectorToDAG} above). If one needs to utilize not just the homotopy type of this space, but its topology, then Lemma~\ref{lemPseudoMfd} may appear important. Indeed, since $\DAG_n$ is a pseudomanifold with boundary, the gradient descent algorithm on this space makes a perfect sense. We propose to explore the experimental aspect of this problem in future research on specific neural architecture search (NAS) tasks.
\end{rem}

We end up the paper with the following meta-observation.

\begin{rem}
There is a variety of spaces encoding certain common properties of graphs and digraphs, of which the spaces $\DAG_n$ and $\DisDS_n$ are just particular representatives. Such spaces are studied in the evasiveness theory, which studies monotone properties of graphs and digraphs. For any monotone property $\ca{P}_n$ of (di)graphs on $n$ vertices, consider the space $K(\ca{P}_n)$ of all (digraphs) satisfying this property. The basic statement of the evasiveness theory asserts that if the space $K(\ca{P}_n)$ is non-contractible, then whether a graph $\Gamma$ has a property $\ca{P}_n$, cannot be checked without inspecting all possible edges of $\Gamma$ in general. A number of results about the topology of the spaces of properties have been obtained in the literature. 

Potentially, all these results may be applied in machine learning. If one needs to solve an optimization problem over a specific set of graphs, this task can be approached by continuous gradient methods. In particular, the total Betti number $\beta(K(\ca{P}_n))$ of a property $\Pa_n$ may serve as an estimate for the number of stationary points of the optimization process via Morse theory. See also~\cite{Forman} for a discrete Morse theory approach to the evasiveness.
\end{rem}

We expect that new problems in combinatorial topology may be motivated by graph properties originating in machine learning.

\end{document}